\newtheorem{theorem}{Theorem}
\newtheorem{preliminary}[theorem]{Preliminary}
\theoremstyle{definition}
\newtheorem{definition}{Definition}
\newcommand{\N}{\mathbb{N}}
\newcommand{\NZ}{\N_0}
\newcommand{\R}{\mathbb{R}}
\newcommand{\C}{\mathbb{C}}
\newcommand{\wo}{\setminus}
\newcommand{\abs}[1]{\left|#1\right|}
\newcommand \sx {\scalebox}
\newcommand \rme {\mathrm{e}}
\newcommand \rot {\begin{rotate}}
\newcommand \ero {\end{rotate}}
\newcommand \iL[1] {\label{#1}}
\newcommand \ing {\includegraphics}
\newcommand \rmi {\mathrm{i}}
\newcommand \be {\begin{eqnarray}}
\newcommand \ee {\end{eqnarray}}
\newcommand \eL[1] {\iL{#1} \end{eqnarray}}
\newcommand \rf [1] {(\ref{#1})}
\newcommand \JP [1] {}
\begin{document}
\title[base exp(1/e)]{Computation of the Two Regular Super-Exponentials to base exp(1/e)}

\author{Henryk Trappmann~}
\address{ }
\email{henryk@pool.math.tu-berlin.de}

\author{~Dmitrii Kouznetsov}
\address{Institute for Laser Science, University of Electro-Communications
1-5-1 Chofugaoka, Chofushi, Tokyo, 182-8585, Japan}
\email{dima@uls.uec.ac.jp}
\thanks{}

\subjclass[2010]{Primary}{30D05}
\subjclass[2010]{Secondary}{30A99}
\subjclass[2010]{Secondary}{33F99}
\subjclass[2010]{Secondary}{65Q20}
\date{\today}
\dedicatory{}
\begin{abstract}
The two regular super-exponentials to base exp(1/e) are constructed.
An efficient algorithm for the evaluation of these super-exponentials
and their inverse functions is suggested and compared to the already 
published results. 
\end{abstract}

\maketitle
\section{Introduction}
We call a holomorphic function $F$ a superfunction \cite{qfac} of some base function $f$ if
it is a solution of the equation
\be
F(z\!+\!1)=f(F(z))
\eL{f}
In the case $f\!=\!\exp_{b}$, i.e.\ for the exponential base function
$f(z)\!=\!b^{z}$, we call $F$ {\em super-exponential} to base $b$. 
In addition, if the super-exponential $F$ satisfies the equation 
\be
F(0)=1
\eL{F01}
we call it {\em tetrational}; for integer values of the argument $z$,
equation \eqref{f} and \eqref{F01} implies $F$ to be the $z$ times
application of the exponential $\exp_b$ to unity
\be
F(z)=\underbrace{\exp_{b}\!\!\Big(\exp_{b}\!\big(
  ... \exp_{b}(}_{z\times}1) ... \big)\Big).
\eL{integerz}

Conversely a function $A$ is called Abel function of some base
function $f$ if it satisfies
\begin{align}
A(f(z))=A(z)\!+\!1\iL{abel0}.
\end{align}
For $f(z)=b^z$ we call $A$ {\em super-logarithm} to base $b$. The inverse of a
super-exponential is a super-logarithm.
(In some ranges of values of $z$, the relations $F(A(z))=z$ and $A(F(z))=z$ hold.)




We have constructed super-exponentials and efficient algorithms of
their numerical evaluation in \cite{citeulike:4195962} for $1\!<\!b\!<\!\rme^{1/\rme}$,
and in \cite{kouznetsov:sqrt2} for $b\!>\!\rme^{1/\rme}$. However,
neither method used in these publications is applicable to base
$b\!=\!\rme^{1/\rme}$. Especially this case is analyzed by Walker in
\cite{walker123}; he evaluates the two Abel functions at several points in the
complex plane. 
Here we show that his constructions are equal to the two regular Abel
functions ({\em regular} in the sense of Szekeres \cite{szekeres:regular}) and suggest a
faster/more precise alternative algorithm (which goes back to
Écalle) that
allows to plot the complex maps in real time.

\JP{
The straightforward iteration of the equation with the Cauchy integral becomes unstable while the asymptotic values at $\pm i \infty$ are the same, and tend to return the constant value $\rme$ which is fixed point of the base function. The representation through the Schroeder function (..) becomes invalid because of singularity of the Schroeder coefficients. 
}


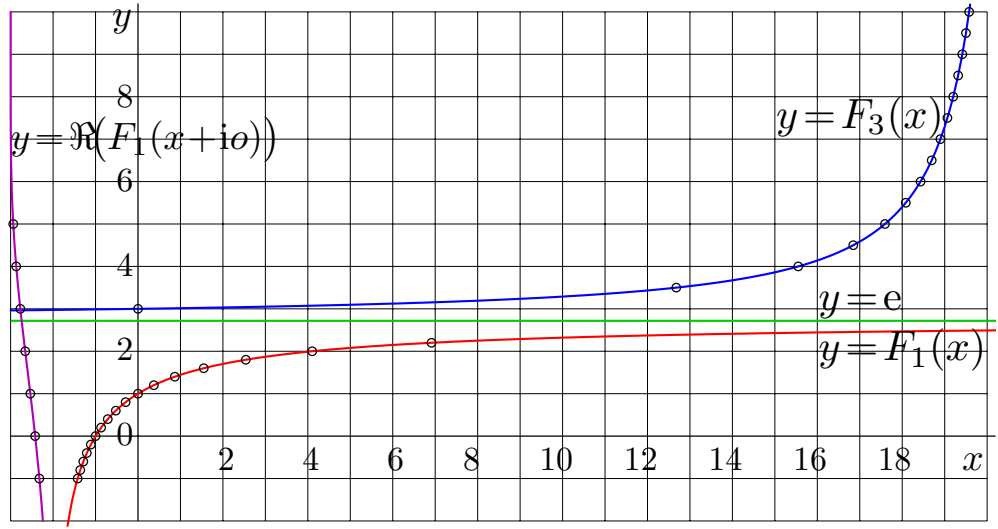
\begin{figure}\begin{center}{
\sx{1.6}{\begin{picture}(340,126)
\put(0,0){\ing{fige1e1}}
\put( 26,119){\sx{.9}{$y$}}
\put( 27,100){\sx{.8}{$8$}}
\put( 27,80){\sx{.8}{$6$}}
\put( 27,60){\sx{.8}{$4$}}
\put( 27,40){\sx{.8}{$2$}}
\put( 27,20){\sx{.8}{$0$}}
\put( 51,14){\sx{.8}{$2$}}
\put( 71,14){\sx{.8}{$4$}}
\put( 91,14){\sx{.8}{$6$}}
\put(109,14){\sx{.8}{$8$}}
\put(127,14){\sx{.8}{$10$}}
\put(147,14){\sx{.8}{$12$}}
\put(167,14){\sx{.8}{$14$}}
\put(187,14){\sx{.8}{$16$}}
\put(207,14){\sx{.8}{$18$}}
\put(227,14){\sx{.9}{$x$}}
\put(2,90){\sx{.9}{$y\!=\!\Re\!\big(F_{1}(x\!+\!\rmi o)\big)$}}
\put(183,95){$y\!=\!F_{3}(x)$}
\put(193,52){$y\!=\!\rme$}
\put(193,40){$y\!=\!F_{1}(x)$}
\end{picture}}
}\end{center}
\caption{Super-exponentials to base $b\!=\!\exp(1/\rme)$ versus real argument;
the asymptotic $y\!=\!\rme$; circles represent the data from \cite{walker123}.
\label{fig1}
\label{fige1e1}
~\iL{fig1e1fre}$\!\!\!\!$
}
\end{figure}

The two super-exponentials $F_{1}$ and $F_{3}$ along the real axis are shown in figure 
\ref{fig1}.
The circles represent the data from tables 1 and 3 by
\cite{walker123}. The behavior of these functions and their inverses
in the complex plane is shown in figure \ref{fig2}.

As in \cite{kouznetsov:sqrt2}, the subscript of the
super-exponential (here 1 or 3) indicates the value at 0 of the
chosen representative of the class of all super-exponentials obtained by argument shift $F(z\!+\!c)$, $c\in\mathbb{C}$ .
We often identify this whole class as one super-exponential. 
We consider two classes of super-exponentials
represented by $F_{1}$ with $F_1(0)\!=\!1$ and by $F_3$ with $F_3(0)\!=\!3$,
respectively. According to the definition, $F_{1}$ is a
tetrational. For the other (above unbounded) super-exponential, the smallest
integer from the range of values along the real axis is chosen as value at zero.

\newcommand \figaxe {
\put( -5,300){\sx{1}{$y$}}
\put( -9,289){\sx{1}{$14$}}
\put( -9,269){\sx{1}{$12$}}
\put( -9,249){\sx{1}{$10$}}
\put( -5,229){\sx{1}{$8$}}
\put( -5,209){\sx{1}{$6$}}
\put( -5,189){\sx{1}{$4$}}
\put( -5,169){\sx{1}{$2$}}
\put( -5, 149){\sx{1}{$0$}}
\put(-13,129){\sx{1}{$-2$}}
\put(-13,109){\sx{1}{$-4$}}
\put(-13,  89){\sx{1}{$-6$}}
\put(-13,  69){\sx{1}{$-8$}}
\put(-17,  49){\sx{1}{$-10$}}
\put(-17,  29){\sx{1}{$-12$}}
\put(-17,   9){\sx{1}{$-14$}}
\put(398,-6.5){\sx{1}{$x$}}
\put(377,-7){\sx{.9}{$28$}}
\put(357,-7){\sx{.9}{$26$}}
\put(337,-7){\sx{.9}{$24$}}
\put(317,-7){\sx{.9}{$22$}}
\put(297,-7){\sx{.9}{$20$}}
\put(277,-7){\sx{.9}{$18$}}
\put(257,-7){\sx{.9}{$16$}}
\put(237,-7){\sx{.9}{$14$}}
\put(217,-7){\sx{.9}{$12$}}
\put(197,-7){\sx{.9}{$10$}}
\put(179,-7){\sx{.9}{$8$}}
\put(159,-7){\sx{.9}{$6$}}
\put(139,-7){\sx{.9}{$4$}}
\put(119,-7){\sx{.9}{$2$}}
\put( 99,-7){\sx{.9}{$0$}}
\put( 73,-7){\sx{.9}{$-2$}}
\put( 53,-7){\sx{.9}{$-4$}}
\put( 33,-7){\sx{.9}{$-6$}}
\put( 13,-7){\sx{.9}{$-8$}}
}




\section{Four methods of calculating the regular iteration with multiplier 1}
In the theory of regular iteration (see e.g.\ \cite{szekeres:regular}
or \cite{kuczma:iterative}) there are several algorithms available to
compute the regular fractional/continuous iteration and the Abel
function of an analytic function at the fixed point 0. 
Functions $h$ with multiplier 1, e.g.\ $h'(0)=1$, are treated
differently from functions $h$ with $|h'(0)|\neq 0,1$.

In our case we have the base function $f(z)=\rme^{z/\rme}$ with fixed point
$\rme$ and $f'(\rme)=1$. As the whole theory of regular iteration assumes
the fixed point to be at 0, we move the fixed point to 0 via a
conjugation with the linear transformation $\tau$:
 Let $\tau(z)=\rme\,(z+1)$ then $\tau^{-1}(z)=z/\rme-1$ and 
\begin{align*}
  \tau^{-1}\circ f \circ \tau(z) = \rme^z - 1 &=: h(z)\\
  f &= \tau\circ h \circ \tau^{-1}
\end{align*}
The regular iterates $f^{[t]}$ at the fixed point $\rme$ are then given by
$f^{[t]} = \tau\circ h^{[t]}\circ \tau^{-1}$, where the regular iterates of
$h$ can be obtained in one of the later described ways. The regular
Abel function $\alpha$ of $h$ (up to an additive constant determined
by $u$) is defined by the inverse of $\sigma_u(t) = h^{[t]}(u)$. We call
this $\sigma_u$ the regular superfunction of $h$ with
$\sigma_u(0)=u$. 
\begin{align}
  \sigma_u(t) &:= h^{[t]}(u) & \alpha_u &:= \sigma_u^{-1} 
  & h^{[t]}(z) &= \sigma_u(t+\alpha_u(z))
\end{align}

The regular Abel function $A_u$ of $f$ at $\rme$
with $A_u(u)=0$ and the regular superfunction $F_u$ of $f$
at $\rme$ with $F_u(0)=u$ can be obtained by 
\begin{align}
  A_u &= \alpha_{\tau^{-1}(u)} \circ \tau^{-1} & F_u &= \tau\circ
  \sigma_{\tau^{-1}(u)}.
\end{align}

The classic limit formula of Lévy \cite{levy:fonctions} (see also
Kuczma \cite{kuczma:iterative} theorem 3.5.6) for the regular Abel
functions of $h$ with multiplier 1 is:
\begin{align}\label{eq:levy}
  \alpha_u(z)&=\lim_{n\to\infty}\frac{h^{[n]}(z) -
    h^{[n]}(u)}{h^{[n+1]}(u)-h^{[n]}(u)}\\
  A_u(z) &= \lim_{n\to\infty}\frac{f^{[n]}(z) - f^{[n]}(u)}{f^{[n+1]}(u)-f^{[n]}(u)}
\end{align}
One can verify that in our case of $h(z)\!=\!\rme^z\!-\!1$, Lévy's formula converges just too slowly; 
it is difficult to reach sufficient precision to make any camera-ready plot of the Abel function.
In table \ref{table:levy} we display
\begin{align*}
y_n=\frac{f^{[n]}(-1) - f^{[n]}(1)}{f^{[n+1]}(1)-f^{[n]}(1)}\to A_1(-1) 
\end{align*}

\newcommand{\sxsx}{\sx{.96}}
\begin{table}
\caption{Computing the Abel function with Lévy's formula.~ \iL{table:levy}}
\noindent
\sxsx{\begin{tabular}{cc|}
$n$& $y_n$\\
100& $-1.4560$~\\ 
101& $-1.4557$\\
102& $-1.4553$\\
103& $-1.4550$\\
104& $-1.4547$\\
105& $-1.4544$\\
106& $-1.4541$\\
107& $-1.4538$\\
108& $-1.4535$\\
109& $-1.4533$\\
\end{tabular}}
\sxsx{\begin{tabular}{cc|}
$n$& $y_n$\\
1,000& $-1.425788$~\\ 
1,001& $-1.425785$\\
1,002& $-1.425781$\\
1,003& $-1.425778$\\
1,004& $-1.425775$\\
1,005& $-1.425771$\\
1,006& $-1.425768$\\
1,007& $-1.425764$\\
1,008& $-1.425761$\\
1,009& $-1.425758$\\
\end{tabular}}
\sxsx{\begin{tabular}{cc|}
$n$& $y_n$\\
10,000& $-1.4226982$~\\ 
10,001& $-1.4226982$\\
10,002& $-1.4226981$\\
10,003& $-1.4226981$\\
10,004& $-1.4226981$\\
10,005& $-1.4226980$\\
10,006& $-1.4226980$\\
10,007& $-1.4226980$\\
10,008& $-1.4226979$\\
10,009& $-1.4226979$\\
\end{tabular}}
\sxsx{\begin{tabular}{cc}
$n$& $y_n$\\
100,000& $-1.42241848$\\ 
100,001& $-1.42241893$\\
100,002& $-1.42241823$\\
100,003& $-1.42241951$\\
100,004& $-1.42241880$\\
100,005& $-1.42241891$\\
100,006& $-1.42241937$\\
100,007& $-1.42241983$\\
100,008& $-1.42241913$\\
100,009& $-1.42241958$\\
\end{tabular}}
\end{table}

There is another interesting possibility to compute the regular
superfunction, which we call here {\em Newton limit formula} (probably
first mentioned by Écalle in \cite{ecalle:invariants}) because of its
similarity to the Newton binomial series of $x^t=(1+(x-1))^t$:
\begin{align}
  \label{eq:newton}
  \sigma_u(t) = h^{[t]}(u) &= \sum_{n=0}^\infty \binom{t}{n} \sum_{m=0}^n
  \binom{n}{m} (-1)^{n-m} h^{[m]}(u)\\
  F_u(t) = f^{[t]}(u) &= \sum_{n=0}^\infty \binom{t}{n} \sum_{m=0}^n
  \binom{n}{m} (-1)^{n-m} f^{[m]}(u).
\end{align}
However also this method has a depressing slow convergence, moreover
we need a bigger internal precision caused by the involved
summation. For example for 1000 summands and 2000 bits precision with $u=1$ and
$t=-1.4223536677333$ we get $F_u(t)\approx -0.9875$ while we would
expect a value very close to $-1$ 
(see table \ref{table:levy}).

Another formula to compute an Abel function of $\rme^x-1$ is given in
Walker's text \cite{walker123}. He computes an Abel function $g_1$
of $a(z)=1-\rme^{-z}$ and an Abel function $g_2$ of $h$ with a formula
which goes back to Fatou
\cite{fatou:equations}:
\begin{align}
  \label{eq:walker:o}
  g_1(z) &= \lim_{n\to\infty} - \frac{1}{3}\log(n) +
  \frac{2}{a^{[n]}(z)}-n, \quad z<0.
\end{align}
We derive the corresponding Abel function of $h$ by knowing that
$a(z)=-h(-z)$. 
\begin{align}
  \alpha^{(1)}_{\rm W}(z) = g_1(-z) &= \lim_{n\to\infty} -\frac{1}{3}\log(n) -
  \frac{2}{h^{[n]}(z)} - n, \quad z<0
~\iL{eq:walker}
\\
  \alpha^{(2)}_{\rm W}(z) =g_2(z) &= \lim_{n\to\infty} - \frac{1}{3}\log(n) -
  \frac{2}{h^{[-n]}(z)}+n, \quad z\ge 0
~\iL{eq:walker2}
\end{align}
The convergence of this formula is better than that of Lévy
but still rather slow (which Walker notices too and
that's why he introduces a slightly accelerated version which we omit
here for brevity). To have an impression of the convergence of
Fatou's/Walker's formula, we display
\begin{align*}
y_n&:=-\frac{2}{h^{[n]}(-\rme^{-1}-1)}+\frac{2}{h^{[n]}(-1)}-1\\
  &\longrightarrow\;\alpha^{(1)}_{\rm W}(\tau^{-1}(-1))-\alpha^{(1)}_{\rm W}(\tau^{-1}(0))-1
= A_1(-1)
\end{align*}
in table \ref{table:fatou}.
\begin{table}
\caption{Computing the Abel function with Fatou's formula.~\iL{table:fatou}}
\begin{tabular}{cc|}
$n$& $a_n$\\
1,000   & $-1.4224939$\\
1,001   & $-1.4224938$\\
1,002   & $-1.4224936$\\
1,003   & $-1.4224935$\\
1,004   & $-1.4224934$\\
1,005   & $-1.4224932$\\
\end{tabular}
~
\begin{tabular}{cc|}
$n$& $a_n$\\
10,000  & $-1.422367740$\\
10,001  & $-1.422367738$\\
10,002  & $-1.422367737$\\
10,003  & $-1.422367736$\\
10,004  & $-1.422367734$\\
10,005  & $-1.422367733$\\
\end{tabular}
~
\begin{tabular}{cc}
$n$& $a_n$\\
100,000 & $-1.42235507550$\\
100,001 & $-1.42235507549$\\
100,002 & $-1.42235507548$\\
100,003 & $-1.42235507546$\\
100,004 & $-1.42235507545$\\
100,005 & $-1.42235507543$\\
\end{tabular}
\end{table}

Before we give the fourth method and showing that Walker's formula is
equivalent to it, we start with some formal background
about regular iteration.
\begin{definition}[regular iteration]
  For every formal powerseries 
  \begin{align}
    h(z)=z+\sum_{n=m}^\infty h_n z^n,\quad m\ge 2, h_m\neq 0
  \end{align}
  and each $t\in\C$ there is exactly one
  formal powerseries $h^{[t]}(z)=z+\sum_{n=m}^\infty {h^{[t]}}_n z^n$,
  such that ${h^{[t]}}_m = t\cdot h_m$ and $h^{[t]}\circ h = h\circ
  h^{[t]}$. We call $h^{[t]}$ the regular iteration of
  $h$. It satisfies $h^{[1]}=h$ and $h^{[s+t]}=h^{[s]}\circ h^{[t]}$
  and is given by the formula:
  \begin{align}
    {h^{[t]}}_N &= \sum_{n=0}^{N-1} \binom{t}{n} \sum_{m=0}^n \binom{n}{m}
    (-1)^{n-m} {h^{[m]}}_N \\
    \label{eq:jabotinsky}
    &= \sum_{m=0}^{N-1} (-1)^{N-1-m}
    \binom{t}{m}\binom{t-1-m}{N-1-i} {h^{[m]}}_N
  \end{align}
  where \eqref{eq:jabotinsky} can already be found as formula (2.19) in
  \cite{Jabotinksy:AnalyticIteration}.

  The formal powerseries $h^{[t]}$ is not necessarily convergent even
  if $h$ is. We call a
  function which has the powerseries $h^{[t]}$ as asymptotic expansion
  at 0 a regular iteration of $h$.

  If $z\mapsto h^{[t]}(z)$ is an analytic function in some domain, we
  call the function 
  $\sigma(t)=h^{[t]}(z_0)$ 
  a regular superfunction
  of $h$ for any $z_0$ in the domain, and we call its inverse a
  regular Abel function of $h$. Usually we identify Abel functions
  that only differ by a constant and we identify superfunctions that
  are translations of each other (i.e. $x\mapsto\sigma(x+c)$
  is identified with $\sigma$).
\end{definition}

\begin{definition}
  Let $h$ be a formal powerseries of the form
  $h(x)=x+\sum_{k=m}^\infty h_k x^k$, $h_m\neq 0$. Its iterative
  logarithm is the unique formal powerseries $j$ of form
  $j(x)=\sum_{k=m}^\infty j_k x^k$ with
  $j_m=h_m$ that satisfies the Julia equation
  \begin{align}\label{eq:julia_equation}
    j\circ h = h' \cdot j.
  \end{align}
\end{definition}
One obtains the Julia equation when differentiating the Abel equation
and then substituting $\alpha' = 1/j$. For reference we give the first
few coefficients of the iterative logarithm $j$ of $h(x)=\rme^x-1$:
\begin{align*}
j(x)=\frac{1}{2}x^2-\frac{1}{12}x^3+\frac{1}{48}x^4-\frac{1}{180}x^5+\frac{11}{8640}x^6-\frac{1}{6720}x^7+\dots
\end{align*}
From this iterative logarithm one can get a description of the regular
Abel function by $\alpha = \int \frac{1}{j}$.
\begin{align*}
\alpha'(x)=\frac{1}{j(x)}=2x^{-2}+
\frac{1}{3}x^{-1}-\frac{1}{36}+\frac{1}{270}x+\frac{1}{2592}x^2-\frac{71}{108864}x^3+\dots
\end{align*}
If we integrate this to get $\alpha$ the term $x^{-1}$ becomes
$\log|x|$ for real $x$ and $\log(\pm x)$ for complex values of $x$; the choice of the sign determines the branch of the resulting function (that unavoidably has the cutline). This gives the expansion 
\begin{align}\label{eq:e1_abel}
\alpha(x)=\underbrace{-2x^{-1}+\frac{1}{3}\log(\pm x)}_{s(x)}
\underbrace{-\frac{1}{36}x+ \frac{1}{540}x^2+
\frac{1}{7776}x^3 -\frac{71}{435456}x^4+
\dots}_{v(x)}
\end{align}
This formula can not be used to get {\em arbitrary} precision,
because $v(x)$ is not convergent as we show now; however 
suitable truncation of the divergent series can be used to obtain
a {\em certain} precision.

\begin{preliminary}[Baker 1958 \cite{Baker:Zusammensetzungen} Satz 17]\label{thm:baker}
The regular iteration $h^{[t]}$ of $h(x)=\rme^x-1$ has
non-zero convergence radius exactly if $t$ is an integer.
\end{preliminary}


\begin{preliminary}[Écalle 1975
\cite{ecalle:theorie_iterative}
]\label{thm:iter:itlog}
Let $h$ be a formal powerseries with multiplier 1. Its regular
iteration powerseries $h^{[t]}$ has a positive radius of convergence
for all $t\in\C$ if and only if its iterative logarithm has a
positive radius of convergence.
\end{preliminary}

\begin{theorem}
The formal powerseries $v(z)$ in \eqref{eq:e1_abel} has 0 convergence
radius.
\end{theorem}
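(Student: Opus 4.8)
The plan is to trace the divergence of $v$ back to the divergence of the iterative logarithm $j$, using the two preliminary theorems. The key observation is that $v$ and $j$ are linked through $\alpha'=1/j$: since $s'(x)=2x^{-2}+\frac{1}{3}x^{-1}$, formula \eqref{eq:e1_abel} gives $v'(x)=1/j(x)-2x^{-2}-\frac{1}{3}x^{-1}$, i.e.\ $v'$ is exactly the regular (non-negative power) part of the formal Laurent series $1/j$. Because termwise integration does not change the radius of convergence, it suffices to prove that this regular part has radius of convergence $0$, and for that it suffices to control $1/j$.

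First I would establish that $j$ itself has radius of convergence $0$. Our base function $h(x)=\rme^x-1$ has multiplier $h'(0)=1$, so Preliminary \ref{thm:iter:itlog} (Écalle) applies: the regular iteration $h^{[t]}$ converges for every $t\in\C$ if and only if $j$ converges. But Preliminary \ref{thm:baker} (Baker) says $h^{[t]}$ has non-zero radius of convergence only when $t\in\Z$; choosing any non-integer $t$ shows the ``for every $t$'' condition fails, and the contrapositive of Écalle's theorem forces $j$ to have radius of convergence $0$.

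It remains to pass from $j$ to its reciprocal. Write $j(x)=x^2 J(x)$ with $J(0)=h_2=\frac{1}{2}\neq 0$; then $J$ has the same (zero) radius of convergence as $j$, and $K:=1/J$ is a well-defined formal powerseries with $K(0)=2$. Here is the one step that needs an argument: if $K$ had positive radius of convergence it would be the Taylor series of a function analytic and non-vanishing near $0$, whence its reciprocal $J=1/K$ would also be analytic near $0$, contradicting that $J$ diverges. Hence $K$ has radius of convergence $0$. Since $1/j(x)=x^{-2}K(x)$, its regular part is obtained from $K$ by discarding the two principal-part terms and relabelling exponents, which leaves the radius of convergence unchanged. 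Therefore the regular part of $1/j$, namely $v'$, and with it $v$, has radius of convergence $0$.

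The only genuine obstacle is the reciprocal step: one must move from the purely formal statement ``the series $j$ diverges'' to a statement about its reciprocal, and the cleanest route is the analytic argument above rather than a direct combinatorial estimate on the coefficients of $1/j$. All the surrounding manipulations---identifying $v'$ with the regular part of $1/j$, factoring out $x^2$, stripping off finitely many Laurent terms, and invoking invariance of the radius of convergence under integration---are routine.
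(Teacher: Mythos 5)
Your proof is correct and is essentially the paper's argument run in the contrapositive direction: the paper assumes $v$ converges, inverts $\alpha'=s'+v'$ (after multiplying by $z^2$ to get a nonzero constant term) to conclude that $j$ converges, and then contradicts Baker's theorem via Écalle's; you instead deduce divergence of $j$ first from Baker and Écalle and then transfer it to $v$ through the reciprocal. The mathematical content --- the same two preliminaries plus the fact that the reciprocal of a convergent powerseries with nonzero constant term is convergent --- is identical in both.
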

\begin{proof}
  Suppose that $v(z)$ has non-zero radius of convergence. Then also 
  $v'(z)=\alpha'(z)-s'(z)$ has non-zero radius of convergence. Then
  $z^2v(z)$ has non-zero radius of convergence and then $z^2 s'(z) +
  z^2 v(z)$ is a powerseries with non-zero radius of convergence with
  non-zero zeroth coefficient. Then
  \begin{align*}
    j(z)=\frac{1}{\alpha'(z)} = \frac{z^2}{z^2s'(z)+z^2v(z)}
  \end{align*}
  has non-zero radius of convergence. Then by theorem
  \ref{thm:iter:itlog} the regular iteration $h^{[t]}$ of
  $h(x)=\rme^x-1$ has non-zero radius of convergence for all $t$. This
  is in contradiction to theorem \ref{thm:baker}. Hence, the series 
diverges.
\end{proof}

Nonetheless we can use the formula \eqref{eq:e1_abel} in a different way
to calculate the regular Abel function of $h(x)=\rme^x\!-\!1$. If we truncate
$v$ to $N$ summands, denoted by $v_N$ and $\alpha_N:=s\!+\!v_N$, then 
Écalle showed (see \cite{ecalle:invariants}, p.\ 78 ff., 95 ff.) 
that there are $2 (m\!-\!1)$ different regular Abel functions
$\alpha^{(j)}$, $1\le j \le 2 (m-1)$ ($\alpha^{(j)}$ is defined on
the $j$-th petal --- each petal touching the fixed point 0 --- of the now called Leau-Fatou flower see
\cite{Milnor2006}) given by:
\begin{align}
\label{eq:ecalle_iterative}\alpha^{(j)}(z)=\lim_{n\to\infty} \alpha_N^{(j)}(f^{[(-1)^{j+1}
  n]}(z))-(-1)^{j+1}n, \quad N\in\N
\end{align}
where $\alpha_N^{(j)}(z)$ is $\alpha_N(z)$ with the logarithmic term
$\log|z|$ (in \eqref{eq:e1_abel}) replaced by $\log\left(z e^{-(\theta_0+j\frac{\pi}{m-1})\rmi}\right)$ and
$\theta_0$ is the unique number in the interval
$(-\frac{\pi}{m-1},\frac{\pi}{m-1}]$ such that $h_m= \abs{h_m}
e^{-\rmi (m-1) \theta_0}$.

This applied to $h(z)=\rme^z-1$ where $m=2$, $h_m=1/2$, $\theta_0=0$,
 we get the two regular Abel functions
\begin{align}
  \label{eq:ecalle1}
  \alpha^{(1)}(z) &= \lim_{n\to\infty} \frac{1}{3}\log(-h^{[n]}(z)) -
  \frac{2}{h^{[n]}(z)} + v_N(h^{[n]}(z)) - n, \quad \Re(z)<0\\
  \label{eq:ecalle2}\alpha^{(2)}(z) &= \lim_{n\to\infty} \frac{1}{3}\log(h^{[-n]}(z)) -
  \frac{2}{h^{[-n]}(z)} + v_N(h^{[-n]}(z))+ n, \quad \Re(z)>0\\
  \label{eq:Ecalle1} A^{(1)}(z) &= \alpha^{(1)}\left(\frac{z}{\rme} - 1\right), \Re(z)<\rme\\
  \label{eq:Ecalle2} A^{(2)}(z) &= \alpha^{(2)}\left(\frac{z}{\rme} - 1\right), \Re(z)>\rme
\end{align}

From these 4 methods we know that Lévy's formula \eqref{eq:levy}, the
Newton formula \eqref{eq:newton}
and Écalle's method \eqref{eq:ecalle1} and \eqref{eq:ecalle2} calculate the {\em regular} iteration/Abel function. We
show in the last part of this section that Walker's formula is equal
to Écalle's formula and hence also computes the regular Abel function.

The application of theorem 1.3.5 in \cite{kuczma:iterative}
gives the following:
\begin{preliminary}[Thron 1960 \cite{thron:sequences} Theorem 3.1.]\label{pre:thron}
Let $h$ be analytic at 0 with powerseries
expansion of the following form
\begin{align*}h(x)=x+h_{m} x^{m} + h_{m+1}x^{m+1} + \dots,\quad
  h_{m}<0,m\ge 2\end{align*}
then 
\begin{align*}
  \lim_{n\to\infty} n^{1/(m-1)} h^{[n]}(x) = (-h_{m} (m-1))^{-1/(m-1)}.
\end{align*}
\end{preliminary}
Now, about the  functions $\alpha^{(1)}_{\rm W}$ and $\alpha^{(2)}_{\rm W}$ constructed by Walker, we have the following theorem:
\begin{theorem}
Functions $\alpha^{(1)}_{\rm W}$ and $\alpha^{(2)}_{\rm W}$
given in \eqref{eq:walker} and \eqref{eq:walker2} 
are the two regular Abel functions of $x\mapsto \rme^x\!-\!1$.
\end{theorem}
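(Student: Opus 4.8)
The plan is to show that each of Walker's functions differs from the corresponding Écalle regular Abel function of \eqref{eq:ecalle1}--\eqref{eq:ecalle2} by an additive constant. Since regular Abel functions are identified up to such a constant, and since Écalle's formulas \eqref{eq:ecalle1}--\eqref{eq:ecalle2} are already known to produce the regular Abel functions of $h(x)=\rme^x-1$, this will establish the claim. Concretely I would fix a branch index $j\in\{1,2\}$, a truncation order $N$, and a point $z$ in the corresponding petal ($\Re(z)<0$ for $j=1$, $\Re(z)>0$ for $j=2$), and compare the $n$-th terms of \eqref{eq:ecalle1} (resp.\ \eqref{eq:ecalle2}) and of \eqref{eq:walker} (resp.\ \eqref{eq:walker2}) \emph{before} passing to the limit.

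First I would observe that the potentially divergent pieces cancel term by term: the summands $-2/h^{[\pm n]}(z)$ and $\mp n$ occur identically in both formulas. Taking $j=1$ for definiteness, what survives in the difference of the $n$-th terms is
\begin{align*}
&\tfrac13\log\!\big(-h^{[n]}(z)\big)+\tfrac13\log(n)+v_N\!\big(h^{[n]}(z)\big)\\
&\qquad=\tfrac13\log\!\big(-n\,h^{[n]}(z)\big)+v_N\!\big(h^{[n]}(z)\big).
\end{align*}
Because $h^{[n]}(z)\to0$ as $n\to\infty$ and $v_N$ is a polynomial with $v_N(0)=0$ (the series $v$ in \eqref{eq:e1_abel} has no constant term), the last summand vanishes in the limit. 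For the first summand I would invoke Thron's theorem (Preliminary \ref{pre:thron}) to pin down the refined rate $-n\,h^{[n]}(z)\to2$, so that the argument of the logarithm tends to $2$.

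The step requiring care is exactly this application of Preliminary \ref{pre:thron}, which is stated for a \emph{negative} leading correction coefficient, whereas $h(x)=\rme^x-1$ has $h_2=\tfrac12>0$. I would therefore apply it not to $h$ but to the conjugate $a(z)=1-\rme^{-z}=-h(-z)$, which satisfies $a_2=-\tfrac12<0$ and $a^{[n]}(z)=-h^{[n]}(-z)$; Thron then gives $n\,a^{[n]}(x)\to(-a_2)^{-1}=2$, i.e.\ $-n\,h^{[n]}(z)\to2$ for $z<0$. For the second petal the analogous computation uses the inverse $g=h^{-1}=\log(1+\cdot)$, which has $g_2=-\tfrac12<0$ and $g^{[n]}=h^{[-n]}$, yielding $n\,h^{[-n]}(z)\to2$. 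In both petals the argument of the logarithm tends to $2$, so the difference of the $n$-th terms converges to $\tfrac13\log2$.

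Finally, since Écalle's formula \eqref{eq:ecalle1} (resp.\ \eqref{eq:ecalle2}) converges to the regular Abel function $\alpha^{(1)}$ (resp.\ $\alpha^{(2)}$) and the difference of the $n$-th terms converges to the constant $\tfrac13\log2$, Walker's limit exists and
\begin{align*}
\alpha^{(1)}_{\rm W}=\alpha^{(1)}-\tfrac13\log2,\qquad
\alpha^{(2)}_{\rm W}=\alpha^{(2)}-\tfrac13\log2,
\end{align*}
so both Walker functions are regular Abel functions of $x\mapsto\rme^x-1$. I expect the main obstacle to be the Thron step: getting the sign conventions right via the auxiliary functions $a=-h(-\cdot)$ and $h^{-1}$, and confirming that the refined (not merely leading-order) asymptotic $\mp n\,h^{[\pm n]}(z)\to2$ is precisely what Preliminary \ref{pre:thron} supplies in the case $m=2$. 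Everything else --- the cancellation of the divergent terms, the vanishing of $v_N\!\big(h^{[\pm n]}(z)\big)$, and the identification up to an additive constant --- is routine once that limit is in hand.
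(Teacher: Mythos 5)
Your proof is correct and follows essentially the same route as the paper: subtract Walker's limit formula from \'Ecalle's, observe that the $-2/h^{[\pm n]}(z)$ and $\mp n$ terms cancel, and apply Thron's theorem (Preliminary \ref{pre:thron}) to $-h(-\,\cdot\,)$ and $h^{-1}$ to conclude that the remaining logarithmic difference tends to a constant. You are in fact slightly more careful than the paper, which silently omits the vanishing $v_N\bigl(h^{[\pm n]}(z)\bigr)$ term from the difference and never evaluates the explicit constant $\tfrac13\log 2$.
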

\begin{proof}
We show that the difference of Walker's and Écalle's limit
formulas is a constant. The differences are:
\begin{align*}
  \delta_1(z) = \lim_{n\to\infty}\frac{1}{3}\log(-h^{[n]}(z)) +
  \frac{1}{3}\log(n) = \lim_{n\to\infty} \frac{1}{3}\log\left(-n h^{[n]}(z)
  \right), \quad z<0\\
  \delta_2(z) = \lim_{n\to\infty}\frac{1}{3}\log(h^{[-n]}(z)) +
  \frac{1}{3}\log(n) = \lim_{n\to\infty} \frac{1}{3}\log\left(n
    h^{[-n]}(z)\right),\quad z>0
\end{align*}
As $x\mapsto -h(-x)$ and $x\mapsto h^{-1}(x)$ for $h(x)=\rme^x-1$ is of the form required by 
preliminary \ref{pre:thron}
with $m-1=1$ we see that each of $n h^{[n]}(z)$ and $n h^{[-n]}(z)$ converges to a constant
independent on $z$.
\end{proof}


\section{A new expansion of the super-exponentials}

This section describes an evaluation of the two super-exponentials to base $b=\exp(1/\rme)$.
The given expansion is fast and precise; it allows to plot the complex maps of these functions in real time.
These maps are shown in figure \ref{figmapf}. 
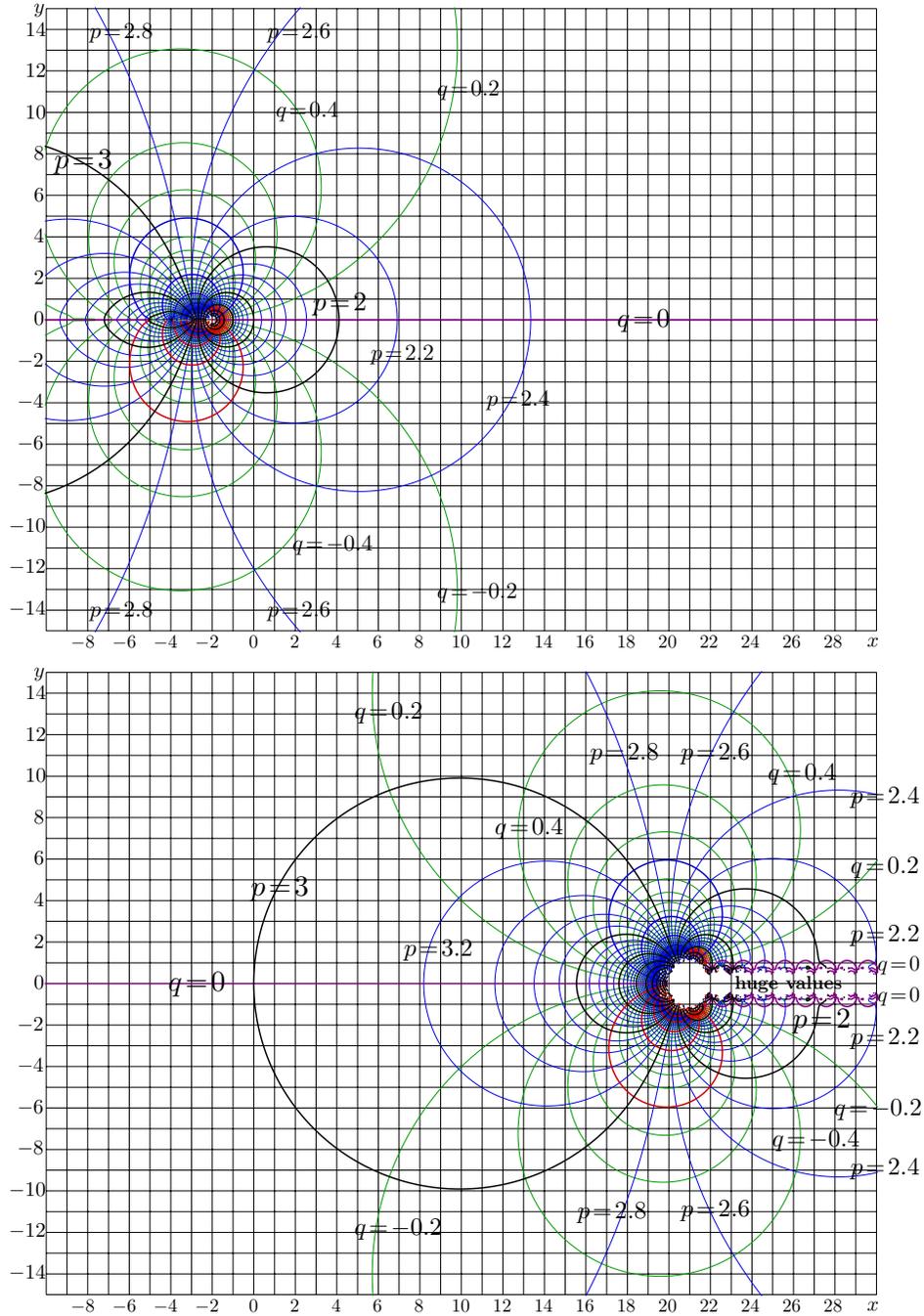
\begin{figure}
\begin{center}
 \sx{.8}{\begin{picture}(415,310) 
\put(0,0){\ing{e1etf}} 
 \figaxe
 \put(  5,225){\sx{1.4}{$p\!=\!3$}}
 \put( 22,288){\sx{1.1}{$p\!=\!2.8$}}  \put(108,288){\sx{1.1}{$p\!=\!2.6$}}
 \put( 22,   8){\sx{1.1}{$p\!=\!2.8$}}  \put(108,    8){\sx{1.1}{$p\!=\!2.6$}}
 \put(112,250){\sx{1.1}{$q\!=\!0.4$}}
 \put(120,  40){\sx{1.1}{$q\!=\!-0.4$}}
 \put(214,110){\sx{1.1}{$p\!=\!2.4$}}
 \put(190,260){\sx{1.1}{$q\!=\!0.2$}}
 \put(130,156){\sx{1.3}{$p\!=\!2$}}
 \put(277, 148.4){\sx{1.3}{$q\!=\!0$}}
 \put(158,132){\sx{1.1}{$p\!=\!2.2$}}
 \put(190, 17){\sx{1.1}{$q\!=\!-0.2$}}
 \end{picture}}

 \sx{.8}{\begin{picture}(415,320) 
 \put(0,0){\ing{e1egf} }  
 \figaxe
 \put(100,194){\sx{1.4}{$p\!=\!3$}}
 \put(150,280){\sx{1.2}{$q\!=\!0.2$}}
 \put(150, 30){\sx{1.2}{$q\!=\!-0.2$}}
 \put(218,224){\sx{1.2}{$q\!=\!0.4$}}
 \put(264,260){\sx{1.2}{$p\!=\!2.8$}}
 \put(258, 38){\sx{1.2}{$p\!=\!2.8$}}
 \put(308,260){\sx{1.2}{$p\!=\!2.6$}}
 \put(308, 39){\sx{1.2}{$p\!=\!2.6$}}
 \put(350,250){\sx{1.2}{$q\!=\!0.4$}}
 \put(390,239){\sx{1.2}{$p\!=\!2.4$}}
 \put(390, 59){\sx{1.2}{$p\!=\!2.4$}}
 \put(390,205){\sx{1.2}{$q\!=\!0.2$}}
 \put(382, 88){\sx{1.2}{$q\!=\!-0.2$}}
 \put(352, 73){\sx{1.2}{$q\!=\!-0.4$}}
 \put(390,172){\sx{1.2}{$p\!=\!2.2$}}
 \put(390,122){\sx{1.2}{$p\!=\!2.2$}}
 \put(403,158){\sx{1.04}{$q\!=\!0$}}
 \put(403,143){\sx{1.04}{$q\!=\!0$}}
 \put(362,130){\sx{1.4}{$p\!=\!2$}}
 \put(174,165){\sx{1.2}{$p\!=\!3.2$}}
 \put( 60,148){\sx{1.4}{$q\!=\!0$}}
 \put(334,149){\sx{.9}{\bf huge values}}
 \end{picture}}

\end{center}
\caption{
Map of $f\!=\!F_{1}(z)$, top, and $f\!=\!F_{3}(z)$, bottom, in the plane $z\!=\!x\!+\!\rmi y$.
Levels
$p\!=\!\Re(f)\!=\!\rm const$ and 
$q\!=\!\Im(f)\!=\!\rm const$ are shown; thick lines correspond to the integer values.
\label{fig2}
\iL{figmapf}
}
\end{figure}

The base function $h(z)=\exp_{b}(z)=\exp(z/\rme)$ has the only fixed point $z=\rme$.
The super-exponential  is expected to approach this point asymptotically.
Consider the expansion of the super-exponential $f$ in the following form:
\be
\tilde F(z)=\rme\cdot\left(1-\frac{2}{z}\left(
	1+\sum_{m=1}^{M} \frac{P_{m}\big(-\ln(\pm z) \big)}{(3z)^m}  
	+\mathcal{O}\!\left(\frac{|\ln(z)|^{m+1}}{z^{m+1}}\right)
\right) \right)
\eL{fo} 
where
\be
P_{m}(t)=\sum_{n=0}^{m} c_{n,m} t^{n}
\eL{P}
The substitution of \eqref{f} into equation
\be
F(z\!+\!1)=\exp(F(z)/\rme)
\eL{Fexpe}
and the asymptotic analysis with small parameter $|1/z|$ determines
the coefficients $c$ in the polynomials \eqref{P}. 
In particular, 
\be
P_{1}(t)&=&t \\
P_{2}(t)&=&t^{2}+t+1/2 \\
P_{3}(t)&=&t^{3}+\frac{ 5}{ 2}t^{2}+\frac{  5}{2}t    +\frac{ 7}{10} \\
P_{4}(t)&=&t^{4}+\frac{13}{ 3}t^{3}+\frac{ 45}{6}t^{2}+\frac{53}{10}t    +\frac{ 67}{60} \\
P_{5}(t)&=&t^{5}+\frac{77}{12}t^{4}+\frac{101}{6}t^{3}+\frac{83}{ 4}t^{2}+\frac{653}{60}t+\frac{2701}{1680}
\eL{Pc}
The evaluation with 9 polynomials $P$ gives an approximation of $f(z)$ with 15 decimal digits at $\Re(z)>4$.
For small values of $z$, the iterations of formula  
\be
F(z)=\ln(F(z\!+\!1))\!~ \rme
\eL{fzL}
can be used. With {\tt complex$<$double$>$} precision, the resulting approximation returns of order of 14 correct decimal digits in the whole complex plane, except the singularities.

For the tetrational we choose the negative sign inside the
logarithm $t=-\ln(-z)$. Then
\be
F_{1}(z)=\tilde F(z+x_{1})
\eL{x1}
where $x_{1}\approx 2.798248154231454$ is the solution of the equation $f(x_{1})\!=\!1$.
For real values of the argument, this function is shown at the bottom of figure \ref{fig1}.
The complex map of this function is shown at the top of figure \ref{fig2}.

The same expressions \eqref{Pc} can be used also for the above
unbounded super-exponential with $t=-\ln(z)$.
Then, the expression
\be
F(z)=\exp(F(z\!-\!1)/\rme)
\eL{fzE}
allows the evaluation of the above unbounded super-exponential at small $|z|$.
The specific super-exponential $F_{3}$ can be expressed as
\be
F_{3}(z)=\tilde F(z+x_{3})
\eL{x3}
where $x_{3}\approx -20.28740458994004$ is solution of equation $\tilde F(x_3)=3$.
Function $F_{3}$ by \eqref{x3},\eqref{fzE},\eqref{f} is shown at the top of figure 
\ref{fig1}
for real argument and in the bottom picture of figure 
\ref{fig2}
for the complex values of its argument.

Due to the leading term in the asymptotic representation, at large values of $|z|$ (except the vicinity of the real axis), both functions $F_{3}(z)$ and $F_{1}(z)$ behave similar to the function  $z \mapsto 1/z$.


$F_3$ is entire, and shows fast growth along the real axis. At large values of $|z|$, the function $F_1(z)$ approaches value $\rme$. Function $F_3(z)$ approaches value 
$\rm e$ for $|z|\rightarrow \infty$ except on the positive direction of the real axis; in this direction, this function shows ``faster than any exponential'' growth.

\section{Numerics and behavior of the two super-logarithms}

\begin{figure}
\begin{center}
 \sx{.8}{\begin{picture}(415,310) 
 \put(0,0){\ing{e1eti} } 
 \figaxe
 \put(10,233){\sx{1.8}{$q\!=\!0$}} 
 \put(142,245){\sx{1.2}{$p\!=\!-3$}}
 \put(142,225){\sx{1.2}{$p\!=\!-3$}}
 \put(10,192){\sx{1.8}{$p\!=\!-2$}}
 \put(10,147){\sx{1.8}{$q\!=\!0$}}
 \put(10,108){\sx{1.8}{$p\!=\!-2$}}
 \put(10, 61){\sx{1.8}{$q\!=\!0$}} 
 \put(142, 72){\sx{1.2}{$p\!=\!-3$}}
 \put(142, 52){\sx{1.2}{$p\!=\!-3$}}
 \put(229,290){\sx{1.6}{\bf complicated structure}} 
 \put(250,230){\sx{1.8}{$A_1(x\!+\!\rmi y) \approx -3$}} 
 \put(229,148){\sx{1.6}{\bf complicated structure}} 
 \put(250,60){\sx{1.8}{$A_1(x\!+\!\rmi y) \approx -3$}} 
 \put(229,5){\sx{1.6}{\bf complicated structure}} 
 \end{picture}}
 \sx{.8}{\begin{picture}(415,320) 
 \put(0,0){\ing{e1egi} }  
 \figaxe
 \put(  4,220){\sx{1.4}{$p\!=\!19$}} 
 \put(352,276){\sx{1.4}{$q\!=\!0.2$}}
 \put(350, 26){\sx{1.4}{$q\!=\!-0.2$}}
 \put(350,186){\sx{1.4}{$p\!=\!18.4$}}
 \put(270,176){\sx{1.4}{$p\!=\!18.2$}}
 \put(233,158){\sx{1.4}{$p\!=\!18$}}
 \put(  4, 74){\sx{1.4}{$p\!=\!19$}} 
 \put(20,186){\sx{1.4}{$q\!=\!1$}}
 \put(20,110){\sx{1.4}{$q\!=\!-1$}}
 \put(20,148.4){\sx{1.4}{\bf cut }} 
 \put(377,148.4){\sx{1.3}{$q\!=\!0$}}
 \end{picture}}
\end{center}
\caption{
The Abel functions  $f\!=\!A_1(z)$ , top, and $f\!=\!A_3(z)$, bottom, in the same notations as in figure \ref{fig2}.
\iL{fig3}
\iL{figi} $\!\!\!\!\!\!\!\!$
}
\end{figure}
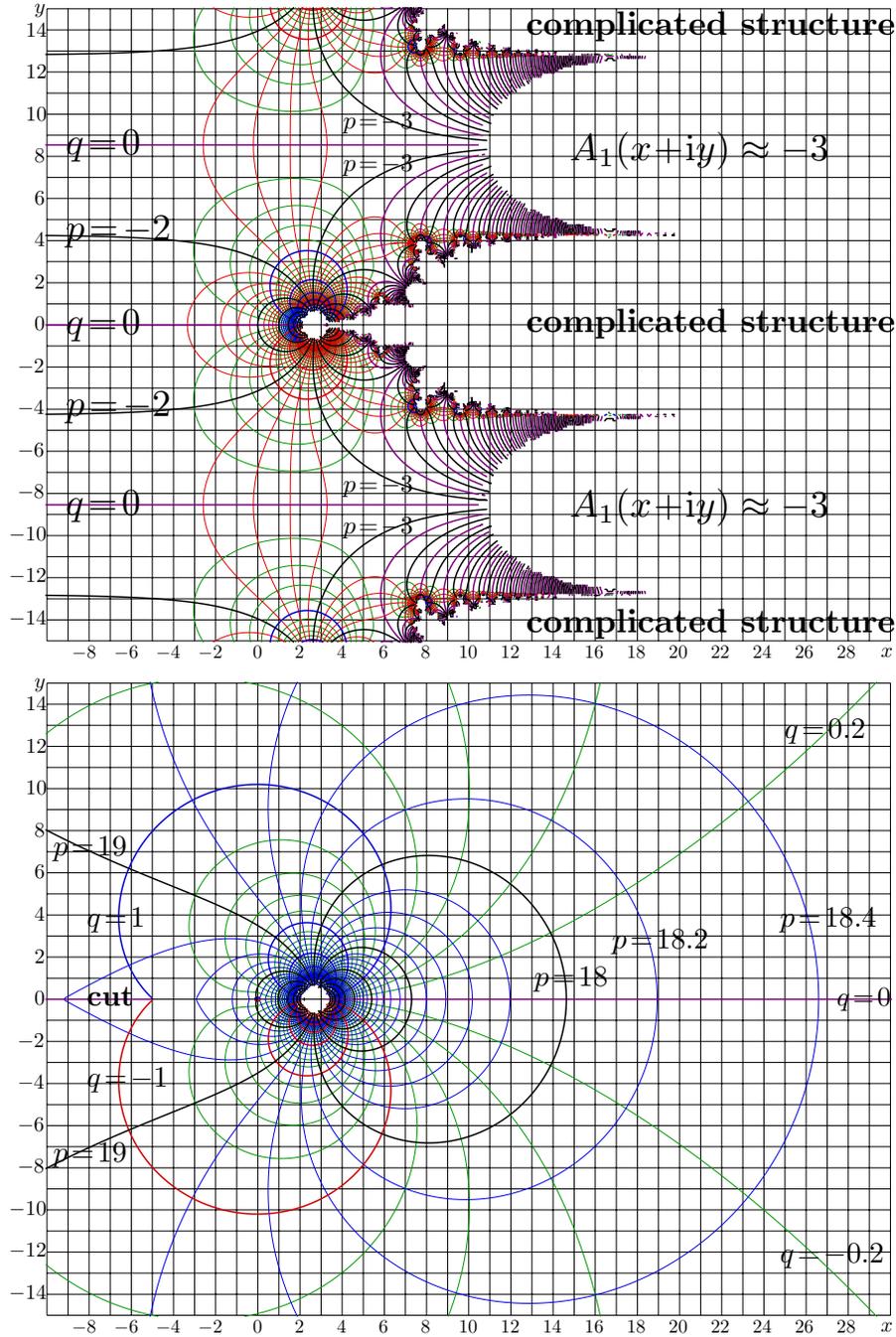

For the evaluation of the super-logarithms, the expression (\ref{eq:e1_abel}) is used. The truncation of the series $v$ keeping the term of the 15th power was used to build-up an approximation of $A^{(1)}$ that returns at least 15 decimal digits for $|z/\rme-1|<\frac{1}{2}$.
\be
A^{(1)}(z)&=& \alpha(-\zeta) \approx \frac{\ln(\zeta)}{3}+\frac{2}{\zeta}+ \sum_{n=1}^{15} c_n \zeta^n 
\label{expanA1app}
\ee
where $\zeta=(\rme-z)/\rme$.
For larger values, the representation
\be
A^{(1)}(z)&=A^{(1)}\big(\exp(z/\rme)\big)+1\iL{gz1}
\label{equaA1}
\ee
is iteratively used.
This allows to extend the approximation to a wide domain keeping of order of 14
correct decimal digits. Then
\be
A_1(z) &=&A^{(1)}(z)-A^{(1)}(1) ~\approx~ A^{(1)}(z) - 3.029297214418 ~
\eL{A3.02929}
is the regular Abel function with the additive constant chosen such
that $A_1(1)\!=\!0$. This function is shown in the top of figure \ref{figi}.

The function $A_1$ is periodic; the period is $T_{1}=2\pi \rme\, \rmi \approx 
17.079468445347134131\!~\rmi$. For real values $z\!>\!\rme$, the representation diverges, 
indicating a natural way to place the cut of the range of holomorphism. In vicinity of this cut, the Abel-function shows  complicated, fractal-like behavior: the self-similar structures reproduce long the range with high density of levels of constant real or imaginary part of $A_1$. 

At the left hand side of the picture, $A_1(z)$ approaches its asymptotic value $-2$ as $\Re(z)\rightarrow -\infty$ . Along the strips in vicinity $\Im(z)=\Im(T_1 + \pi \rmi (2n\!+\!1))$, $n\!\in\!~$integers, as $\Re(z)\rightarrow +\infty$, this function approaches its another limiting value:
$A_1(z) \rightarrow -3$. The transfer from the asymptotic value $-2$ to the asymptotic value $-3$ corresponds to the transition from the singularity at 
$z\!=\!-2$ to the singularity    
$z\!=\!-3$ of function $F_1(z)$ in the top picture of figure \ref{figmapf}. 

The second super-logarithm $A^{(2)}$ has also good approximation for small
values of $|\zeta|$ for the same $\zeta=(\rme\!-\!z)/\rme$:
\be
A^{(2)}(z)& \approx & \frac{\ln(-\zeta)}{3} + \frac{2}{\zeta} + \sum_{n=1}^{16} c_n \zeta^n
\ee
with the same coefficients $c$, as in the case of $A^{(1)}$.
(The only difference is the opposite sign in the argument of the logarithm.)
For large values, an extension to a wide range
in the complex plane can be similarly realized with 
\be
A^{(2)}(z)&=&A^{(2)}\!\big(\log(z)~\rme\big)-1
~
\eL{gz2}
The resulting function 
\be
A_3(z)=A^{(2)}(z)-A^{(2)}(3) \approx A^{(2)}(z) + 20.0563555297533789
\ee
 is plotted in the right bottom part of figure \ref{figi}. Function $A_3(z)$ is not periodic, and has cut from the branch-point $z\!=\!\rme$ to the negative direction of the real axis. In the positive direction of the real axis, it grows to infinity, and this grow is very slow (slower than any finite combination of logarithms).
 
The asymptotic representation for the Abel functions $A_1$ and $A_3$ can be inverted, using various combinations of $A-\rme$ and  
$\ln\!\big(\!\pm(A-\rme)\big)$ as a small parameter. Different small parameter allows different representations for the super-exponentials $F_1$ and $F$;
it seems many of them give comparable speed and comparable prevision;
at least they do not add much errors to the rounding errors at the
$\rm complex\!<\!doube\!>$ implementation. The asymptotics of the previous section seems to be the fastest, although the careful comparison of efficiency of various asymptotic formulas may be subject for the future investigation.
 
For the plotting of figure \ref{figi}, the algorithms for $A_1$ and $A_3$ were implemented in C++ with $\rm complex\!<\!doube\!>$ arithmetics.
In order to verify the consistency of these algorithms to those for $F_1$ and $F_3$, the following agreements are considered:
\be
D_{\rm 1AF}(z)= \lg \left| \frac
{A_1(F_1(z))+z}
{A_1(F_1(z))-z} \right|
~ &,&~
D_{\rm 1FA}(z)= \lg \left| \frac
{F_1(A_1(z))+z}
{F_1(A_1(z))-z} \right|
\iL{D1}\\
\rule{0pt}{3pt}\nonumber\\
D_{\rm 3AF}(z)= \lg \left| \frac
{A_3(F_3(z))+z}
{A_3(F_3(z))-z} \right|
~ &,&~
D_{\rm 3FA}(z)= \lg \left| \frac
{F_3(A_3(z))+z}
{F_3(A_3(z))-z} \right|
\eL{D3}
Two of them, namely, $D\!=\!D_{\rm 1AF}(z)$ and $D\!=\!D_{\rm 3FA}(z)$ are shown in figure \ref{figd} with contours $D\!=\!\rm const$. As for $D_{\rm 1FA}(z)$ and $D_{\rm 3AF}(z)$, they remain of order of 14 in the whole range of such a picture, so, they are not presented here.

\begin{figure}
\begin{center}
\sx{.8}{\begin{picture}(415,310) 
\put(0,0){\ing{e1etfi}} 
 \figaxe
 \put( 80,262){\sx{1.5}{$D_{\rm 1FA}\!<\!1$}}
 \put( 10,170){\sx{1.5}{$12\!<\!D_{\rm 1FA}\!<\!14$}}	
 \put(202, 128){\sx{1.5}{$D_{\rm 1FA}\!<\!1$}}
 \put(126,160){\sx{1.4}{$15$}}
 \put( 80, 42){\sx{1.6}{$D_{\rm 1FA}\!<\!1$}}
 \put(182,229){\sx{1.1}{11}}
 \put(182,66){\sx{1.1}{11}}
 \end{picture}}
 \sx{.8}{\begin{picture}(415,320) 
\put(0,0){\ing{e1egif}} 
 \figaxe
 \put( 90,242){\sx{1.5}{$D_{\rm 3AF}\!>\!14$}}
 \put(355,146){\textcolor{white}{\rule{30pt}{12pt}}  }
 \put(356,148){\sx{1.2}{$D\!<\!1$}}
 \end{picture}}
\end{center}
\caption{
The agreements 
 $D=D_{\rm 1FA}(z)$ and 
 $D=D_{\rm 3AF}(z)$ and 
in the complex plane $z\!=\!x\!+\!\rmi y$; level $D\!=\!1$ is shown with thick lines;
the integer values are shown with thin lines;
levels $D\!=\!2$ is shown with black thick lines,
levels $D\!=\!1,2,10,12,14$ are seen;
symbols ``15'' and ``11'' indicate the ranges where $D\!>\!14$ and 
$10\!>\!D\!>\!12$.
\iL{figd} $\!\!\!\!\!\!\!\!$
}
\end{figure}
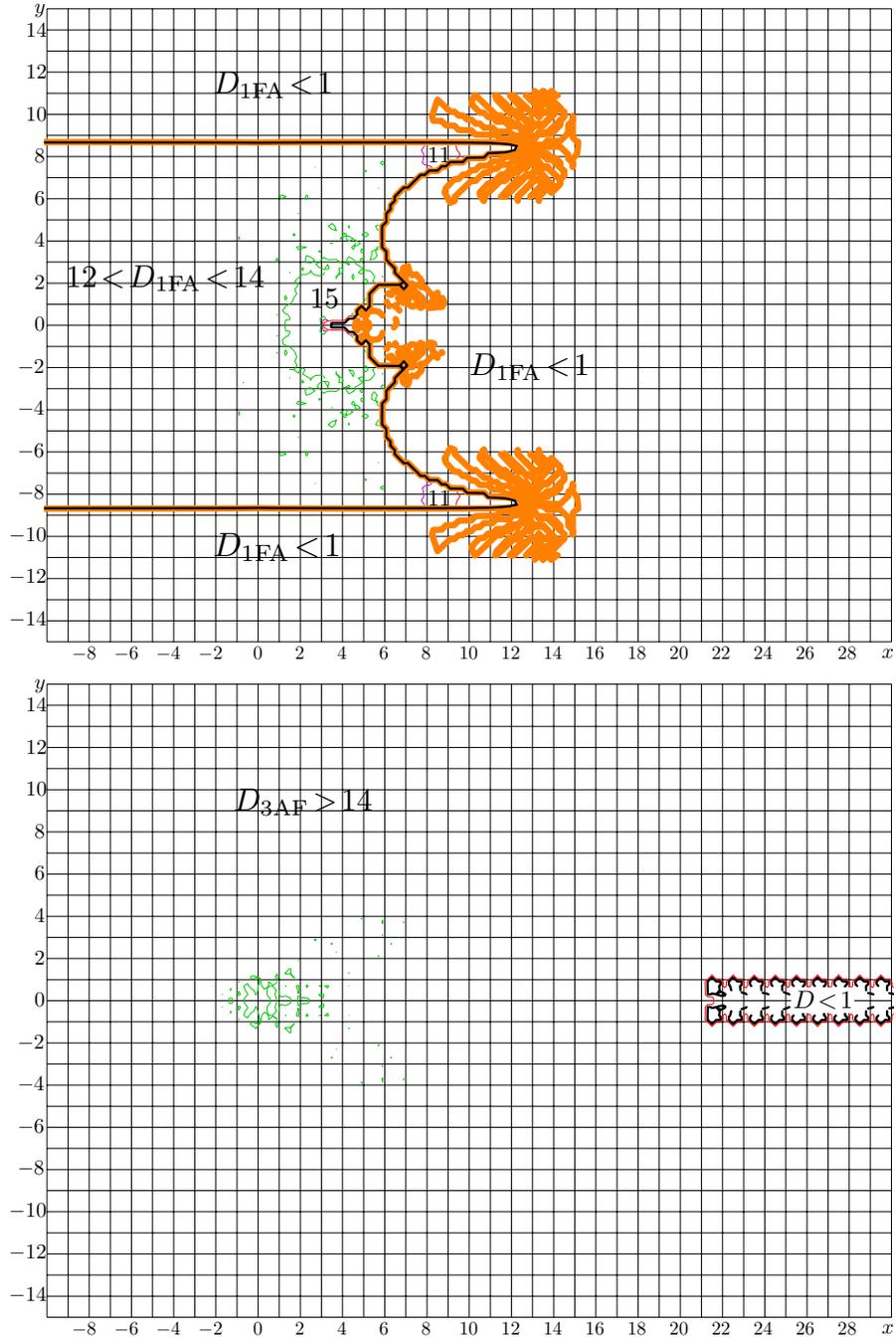

In figure \ref{figd}, symbol ``15'' indicates the region where $D>14$, and symbol ``11" 
indicates the ranges where $10<D<12$.

The top picture in figure \ref{figd} shows, that in the central part, the implementations of functions $F_1$ and $A_1=F_1^{-1}$ are consistent within at least 14 decimal digits.
In the right hand side, the branches of functions $F_{1}$ and $A_1$ do not match, and the agreement is poor.

The good agreement indicates, that the algorithms above work close to
the best precision achievable with the {\tt complex$<$double$>$} variables. However, the range of validity of relation $F(F^{-1}(z))=z$ is limited by the cut lines of the functions $F$ and $F^{-1}$.
In such a way, the numerical tests confirm the efficient C++ implementation of the super-exponentials  $F_1$ and $F_3$ and the corresponding Abel-exponentials  $A_1$ and $A_3$.


\section{Non-integer iteration}

Each of the pairs ($F_{1},A_{1}$) and ($F_{3},A_{3}$)
can be used to construct the regular iteration of the exponential to base $b\!=\!\exp(1/\rme)$:
\be
\exp_{b,1}^{[c]}(z)=F_1\!\big(c+A_{1}(z)\big) ~& \iL{q1}\\
\exp_{b,3}^{[c]}(z)=F_3\!\big(c+A_{3}(z)\big) ~& \iL{q3}
\ee
These functions are shown in figure \ref{figq} for $c\!=\!1/2$.
For comparison, the function $y\!=\!\exp_{b}(z)$ is plotted with a
thin curve. Visually, the thick solid curve looks like a continuation of the dashed curve. 
However, analytic continuation is not possible because $z\!=\!\rme$ is
a branch point of $A_{1}(z)$. Similar visual effects are discussed in \cite{kouznetsov:sqrt2} for the case $b\!=\!\sqrt{2}$.
\begin{figure}
\begin{center}{
\sx{1.8}{\begin{picture}(190,126)
\put(0,0){\ing{fige1eq}}
\put( 83,119){\sx{.8}{$y$}}
\put( 83,99.5){\sx{.7}{$8$}}
\put( 83,79.5){\sx{.7}{$6$}}
\put( 83,59.5){\sx{.7}{$4$}}
\put( 78,47.5){\sx{.7}{$\rme$}}
\put( 83,39.5){\sx{.7}{$2$}}
\put( 83,19.5){\sx{.7}{$0$}}
\put( -4.4,23){\sx{.7}{$-8$}}
\put( 15.6,23){\sx{.7}{$-6$}}
\put( 35.6,23){\sx{.7}{$-4$}}
\put( 55.6,23){\sx{.7}{$-2$}}
\put(108.3,18){\sx{.7}{e}}
\put(101,23){\sx{.7}{$2$}}
\put(121,23){\sx{.7}{$4$}}
\put(141,23){\sx{.7}{$6$}}
\put(161,23){\sx{.7}{$8$}}
\put(173,23){\sx{.7}{$x$}}
\put(119.5,104.5){\sx{.9}{$y\!=\!b^x$}}
\put(146, 95){\sx{.9}{$y\!=\!\exp_{b,3}^{[c]}(x)$}}
\put(-2,5){\sx{.9}{$y\!=\!\exp_{b,1}^{[c]}(x)$}}
\put(107.6,21.5){\textcolor{green}{\rule{4pt}{1pt}}}
\end{picture}}
\sx{.82}{\begin{picture}(425,70)
\put(25,10){\ing{e1eqzoom4}}
\put( 0,30){\sx{1}{$~0.001$}}
\put( 0,20){\sx{1}{$~ 0$}}
\put( -4,10){\sx{1}{$-0.001$}}
\put(117,1){\sx{1.2}{$\rme\!-\!0.1$}}
\put(226,1){\sx{1.3}{$\rme$}}
\put(318,1){\sx{1.2}{$\rme\!+\!0.1$}}
\put(416,1){\sx{1.2}{$x$}}
\put(325,55){\sx{.96}{$y\!=\! \Re\big(d_{\rm q13}(x)\big)$}}
\put(370,37){\sx{.96}{$y\!=\! \Im\big(d_{\rm q13}(x)\big)$}}
\end{picture}}
}\end{center}
\caption{ Behavior of the two square roots ($c\!=\!\frac{1}{2}$) of  the exponential to base  $b\!=\!\exp(1/\rme)$.
Top picture: Dashed: $y\!=\!\exp_{b,1}^{[1/2]}(x)$ by \eqref{q1};
Thick solid: $y\!=\!\exp_{b,3}^{[1/2]}(x)$ by \eqref{q3};
Thin solid: $y\!=\!\exp_b(x)\!=\!b^{x}$. 
Bottom picture: 
$y\!=\! \Re\big(d_{\rm q13}(x)\big)$, thick line, and 
$y\!=\! \Im\big(d_{\rm q13}(x)\big)$, thin  line,
in vicinity of  $x\!=\!\rme$ by equation \rf{D13}.
\iL{figq}
}
\end{figure}
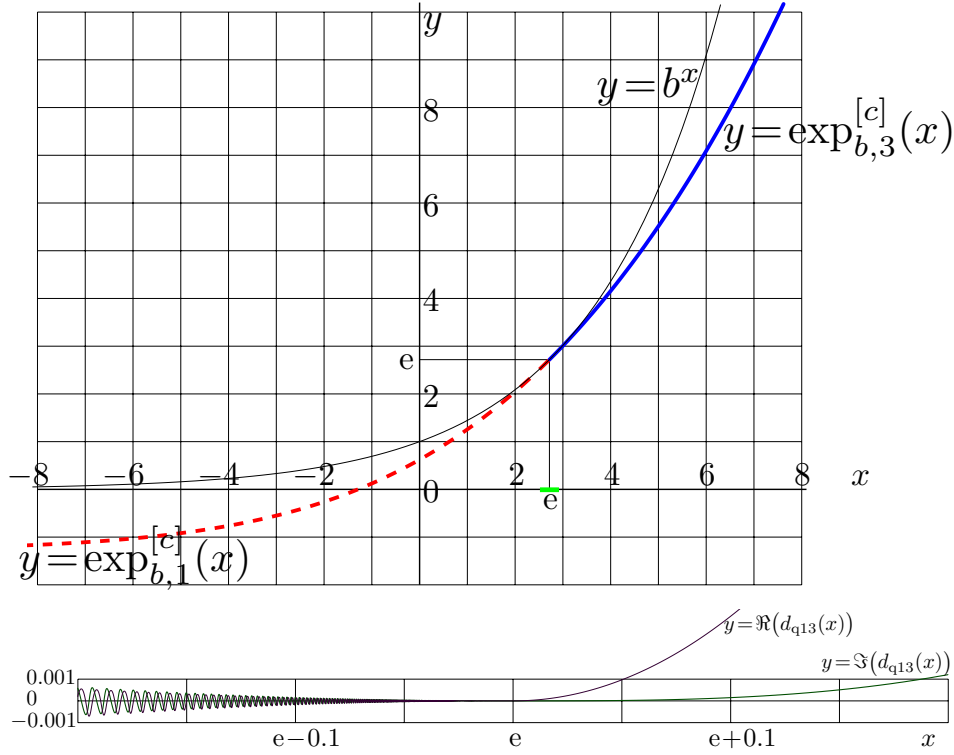
In order to see that $\exp_{b,1}^{[c]}(z)$ and $\exp_{b,3}^{[c]}(z)$, at least for $b\!=\!\exp(1/\rme)$ and $c\!=\!1/2$, are pretty different functions, 
the difference of the "continuations" of these two functions, id est, 
\be
d_{\rm q13}(x)=\exp_{b,1}^{[1/2]}(x\!+\!\rmi o) - \exp_{b,3}^{[1/2]}(x\!+\!\rmi o)
\label{d}
\label{D13}
\eL{dq13}
 in vicinity of $x\!=\!\rme$ is shown at the bottom picture of figure \ref{figsqrt}.

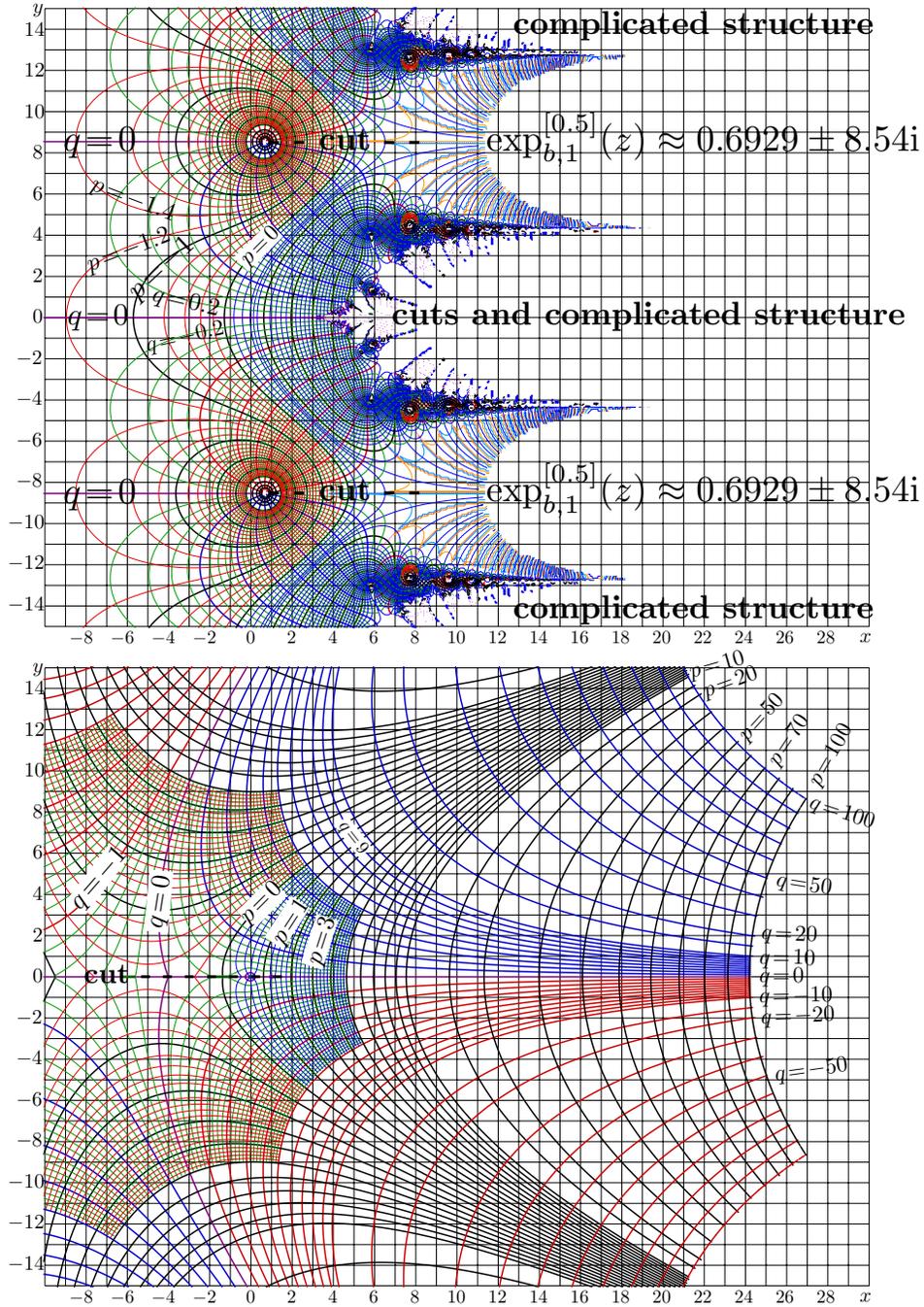
\begin{figure}
\begin{center}
 \sx{.8}{\begin{picture}(415,310) 
\put(0,0){\ing{e1ethalf1} } 
 \figaxe
\put( 10,233){\sx{1.8}{$q\!=\!0$}} 
\put(110,233.4){\sx{1.6}{\bf - - cut - - }} 
\put(210,233){\sx{1.8}{ $\exp_{b,1}^{[0.5]}(z)\approx 0.6929 \pm 8.54 \rmi$} } 
\put( 10,61){\sx{1.8}{$q\!=\!0$}} 
\put(110,62.4){\sx{1.6}{\bf - - cut - -}} 
\put(210,62){\sx{1.8}{ $\exp_{b,1}^{[0.5]}(z)\approx 0.6929 \pm 8.54 \rmi$} } 
\put(229,290){\sx{1.6}{\bf complicated structure}} 
\put(12,148){\sx{1.5}{$q\!=\!0$}} 
\put(170,148){\sx{1.6}{\bf cuts and complicated structure}} 
\put(229,5){\sx{1.6}{\bf complicated structure}} 
\put(23,214){\sx{1.2}{\rot{-21} $p\!=\!-1.4$\ero}}
\put(23,174){\sx{1.2}{\rot{21} $p\!=\!-1.2$\ero}}
\put(46,160){\sx{1.4}{\rot{43} $p\!=\!-1$\ero}}
\put(53,160){\sx{1.2}{\rot{-13} $q\!=\!0.2$\ero}}
\put(51,135){\sx{1.1}{\rot{13} $q\!=\!-0.2$\ero}}
\put(100,174){\sx{1.1}{\rot{45} \textcolor{white}{\rule{22pt}{9pt}}\ero}}
\put(100,177){\sx{1.1}{\rot{45} $p\!=\!0$\ero}}
 \end{picture}}
 \sx{.8}{\begin{picture}(415,320) 
 \put(0,0){\ing{e1eghalf} }  
 \figaxe
\put( 21,182){\sx{1.3}{\rot{53} \textcolor{white}{\rule{26pt}{9pt}}\ero}}
\put( 18,184){\sx{1.3}{\rot{49} $q\!=\!-1$\ero}}
\put( 61,173){\sx{1.3}{\rot{85} \textcolor{white}{\rule{23pt}{9pt}}\ero}}
\put( 58,175){\sx{1.3}{\rot{81} $q\!=\!0$\ero}}
\put(20,148.4){\sx{1.4}{\bf cut - - - - - - -}} 
	\put(316,297){\sx{1.1}{\rot{16}$p\!=\!10$\ero}}
	\put(323,288){\sx{1.1}{\rot{20}$p\!=\!20$\ero}}
	\put(343,266){\sx{1.1}{\rot{48}$p\!=\!50$\ero}}
	\put(358,254){\sx{1.1}{\rot{56}$p\!=\!70$\ero}}
	\put(378,244){\sx{1.1}{\rot{60}$p\!=\!100$\ero}}

\put(373,234){\sx{1.1}{\rot{-18}$q\!=\!100$\ero}}
\put(357,196){\sx{1.1}{\rot{-9}$q\!=\!50$\ero}}
\put(350,170){\sx{1.1}{\rot{-2}$q\!=\!20$\ero}}
\put(349,158){\sx{1.1}{\rot{-1}$q\!=\!10$\ero}}
\put(349,148.8){\sx{1.1}{$q\!=\!0$}}
\put(349,139){\sx{1.1}{\rot{1}$q\!=\!-10$\ero}}
\put(350,129){\sx{1.1}{\rot{2}$q\!=\!-20$\ero}}
\put(357,101){\sx{1.1}{\rot{8}$q\!=\!-50$\ero}}
\put(142,225){\sx{.9}{\rot{-52}  \textcolor{white}{\rule{22pt}{9pt}} \ero }}
\put(144,225){\sx{.9}{\rot{-52} $q\!=\!9$\ero}}
\put( 99,175){\sx{1.2}{ \rot{45}  \textcolor{white}{\rule{19pt}{10pt}} \ero }}
\put(100,177){\sx{1.2}{\rot{45} $p\!=\!0$\ero}}
\put(117,165){\sx{1.2}{\rot{57}  \textcolor{white}{\rule{20pt}{9pt}} \ero }}
\put(117,167){\sx{1.2}{\rot{57} $p\!=\!1$\ero}}
\put(136,155){\sx{1.2}{\rot{72}  \textcolor{white}{\rule{21pt}{9pt}} \ero }}
\put(135,157){\sx{1.2}{\rot{72} $p\!=\!3$\ero}}
 \end{picture}}
\end{center}
\caption{
Two different "square roots" of the exponential to base $\exp(1/\rme)$ by equations (\ref{q1}) and (\ref{q3}) in the same notations as in figures \ref{figmapf} and \ref{figi}.
\iL{figsqrt} $\!\!\!\!\!\!\!\!$
}
\end{figure}

The complex map of the two square roots of the exponential to base $\exp(1/\rme)$ are potted in figure \ref{figsqrt} in the same notations, as in figures \ref{figmapf} and \ref{figi}.
The additional levels $p\!=\!0.692907175521155$ and $q\!=\!\pm 8.53$
are plotted in order to reveal the behavior of the $\exp_{b,1}^{[0.5]}(z)$ in the strips along the cut lines
$\Re(z)\!>\!0.7$, $\Im(z)\!\approx\! \pm 8.5$~. The comparison of the
pictures in figure  \ref{figsqrt} show how different the functions
$\exp_{b,1}^{[0.5]}$  and $\exp_{b,3}^{[0.5]}$ are when considered away from the real axis.
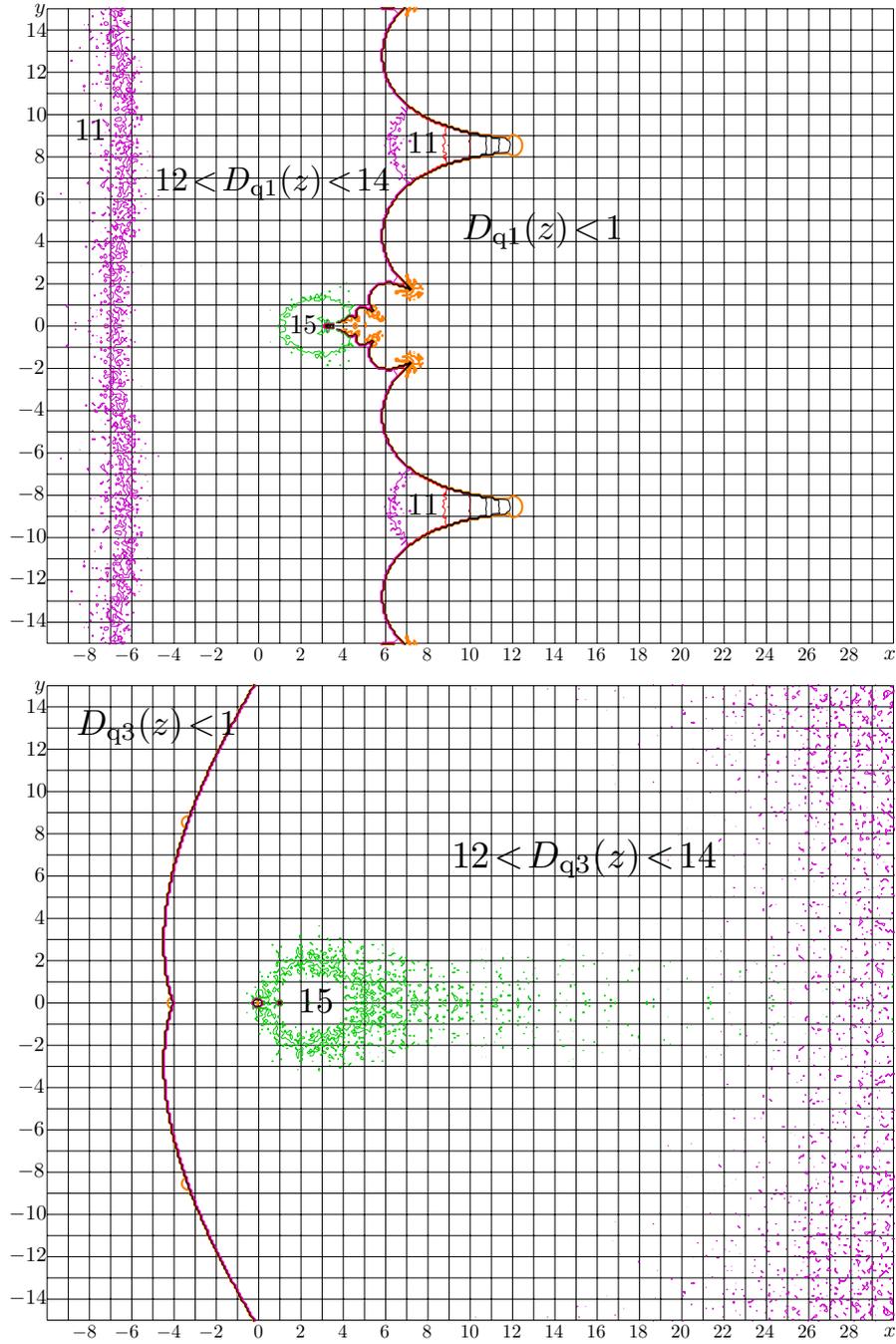
\begin{figure}
\begin{center}
 \sx{.8}{\begin{picture}(415,310) 
 \put(0,0){\ing{e1ethalf2e} } 
 \figaxe 
 \put(14,240){\sx{1.5}{$11$}}
 \put(52,216){\sx{1.6}{$12\!<\!D_{\rm q1}(z)\!<\!14$}}
 \put(172,234){\sx{1.5}{$11$}}
 \put(116,148){\sx{1.3}{$15$}}
 \put(172,62){\sx{1.5}{$11$}}
 \put(198,193){\sx{1.7}{$D_{\rm q1}(z)\!<\!1$}}
 
\end{picture}}
 \sx{.8}{\begin{picture}(415,320) 
 \put(0,0){\ing{e1eghalf2e} }  
 \figaxe
 \put(15,279){\sx{1.7}{$D_{\rm q3}(z)\!<\!1$}}
 \put(120,147){\sx{1.7}{$15$}}
 \put(193,216){\sx{1.8}{$12\!<\!D_{\rm q3}(z)\!<\!14$}}
 \end{picture}}
\end{center}
\caption{
The agreements $D_{\rm q1}(z)$ and $D_{\rm q3}(z)$ by equations (\ref{Dq1}) and (\ref{Dq3})  in the complex plane $z\!=\!x\!+\!\rmi y$. 
\label{sqrtsqrte}
}
\end{figure}
For the functions 
$\exp_{b,1}^{[1/2]}(z)$ and
$\exp_{b,3}^{[1/2]}(z)$, in wide ranges of $z$, the relations
\be
\exp_{b,1}^{[1/2]}\Big(\exp_{b,1}^{[1/2]}(z)\Big)=z
\\
\exp_{b,3}^{[1/2]}\Big(\exp_{b,3}^{[1/2]}(z)\Big)=z
\ee
hold. As in the case of the square root of the logistic operator \cite{logistic}, the ranges of validity of these equations do not cover the whole complex plane, and they are different. In order to show these ranges, the agreements
\be
D_{\rm q1}(z)= \lg \left| \frac
{\exp_{b,1}^{[1/2]}\Big(\exp_{b,1}^{[1/2]}(z)\Big)+\exp_b(z)}
{\exp_{b,1}^{[1/2]}\Big(\exp_{b,1}^{[1/2]}(z)\Big)-\exp_b(z)}
\right|
\iL{Dq1}
\\
\nonumber
\\
D_{\rm q3}(z)= \lg \left| \frac
{\exp_{b,3}^{[1/2]}\Big(\exp_{b,3}^{[1/2]}(z)\Big)+\exp_b(z)}
{\exp_{b,3}^{[1/2]}\Big(\exp_{b,3}^{[1/2]}(z)\Big)-\exp_b(z)}
\right|
\iL{Dq3}
\ee
are shown in figure \ref{sqrtsqrte}. In particular, $\exp_{b,3}^{[1/2]}(x)$ can be considered as "true" square root of the exponential to base $b\!=\!\exp(1/\rme)$
for $x\!<\! \rme$, while $\exp_{b,3}^{[1/2]}(x)$ can be considered at
the "true" root at $x\!>\! \rme$; but these square roots can not be
combined into the same holomorphic function. 

The fractional iterations by \eqref{q1},\eqref{q1} can be evaluated for complex values of $z$ and even 
for complex values of $c$; but only at integer values of $c$ these two functions can be considered as holomorphic extensions of each other.
The fractional iteration provides a smooth (holomorphic) transition from the exponential at $c\!=\!1$ to the 
logarithm at $c\!=\!-1$, passing through the ``square root'' of the
exponential at $x\!=\!1/2$, the identity function at $c\!=\!0$ and the
``square root'' of the logarithm at $c\!=\!-1/2$~. In a similar way, the complex iterations of a function can be considered.

The non-integer iteration of the exponentials provides a set of functions that grow up faster than any polynomial but slower than any exponential. Such functions may find applications in various areas of physics and technology.
Similar non-integer iteration for other functions (including the exponentials of different bases and factorial) were discussed recently 
\cite{citeulike:4195962,kouznetsov:sqrt2,qfac,logistic},
but the peculiarity of the fixed points of the exponential at the base
$b\!=\!\exp(1/\rme)$ required the special consideration above.

\section{Comparison of the tetrationals to different bases}
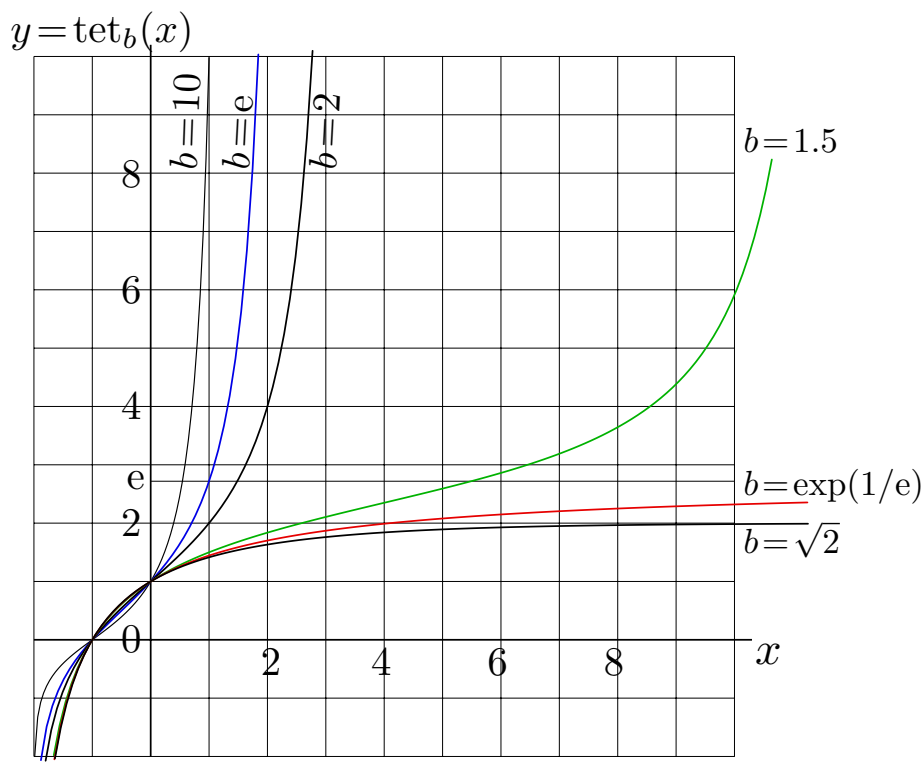
\begin{figure}
\begin{center}
\sx{2.2}{\begin{picture}(155,128)
\put(0,0){\ing{fig10q2}}
\put(-2,125){\sx{.7}{$y\!=\!{\rm tet}_{b}(x)$}}
\put( 17,100){\sx{.7}{$8$}}
\put( 17,80){\sx{.7}{$6$}}
\put( 17,60){\sx{.7}{$4$}}
\put( 18,48){\sx{.7}{e}}
\put( 17,40){\sx{.7}{$2$}}
\put( 17,20){\sx{.7}{$0$}}
\put( 41,16){\sx{.7}{$2$}}
\put( 60,16){\sx{.7}{$4$}}
\put( 80,16){\sx{.7}{$6$}}
\put(100,16){\sx{.7}{$8$}}
\put(126,18){\sx{.8}{$x$}}
\put(30,103){\sx{.68}{\rot{88} $b\!=\!10$ \ero } }
\put(39,103){\sx{.68}{\rot{87} $b\!=\! \rme$ \ero } }
\put(54,103){\sx{.68}{\rot{87} $b\!=\!2$ \ero } }
\put(124,106){\sx{.6}{$b\!=\!1.5$}}
\put(124,47){\sx{.6}{$b\!=\!\exp(1/\rme)$}}
\put(124,37){\sx{.6}{$b\!=\!\sqrt{2}$}}
\end{picture}}
\end{center}
\caption{Tetrationals to base 
$b\!=\!10$,
$b\!=\!\rme\!\approx\! 2.71$,
$b\!=\!2$,
$b\!=\!1.5$,
$b\!=\!\exp(1/\rme)\!\approx\! 1.44$ (the same curve as $F_{1}$ in figure \ref{fig1e1fre}),  
and
$b\!=\!\sqrt{2}\!\approx\! 1.41$
versus real argument.
~\iL{fig10q2}$\!\!\!\!$
}
\end{figure}

In this section, the tetrational $F_1$ to base $\exp(1/\rme)$ is compared to tetrationals to various bases. In fig \ref{fig10q2}, the tetrational ${\rm tet}_{b}$ versus real argument is shown for 
$b\!=\!10$,
$b\!=\!\rme\!\approx\! 2.71$,
$b\!=\!2$,
$b\!=\!1.5$,
$b\!=\!\exp(1/\rme)\!\approx\! 1.44$
and
$b\!=\!\sqrt{2}\!\approx\! 1.41$~.
The functions for $b\!>\!\exp(1/\rm e)$ are evaluated using the Cauchy algorithm described in \cite{citeulike:4195962}.
For $b\!<\!\exp(1/\rme)$, the regular iteration described in 
\cite{kouznetsov:sqrt2}
 is used.
For $b\!=\!\exp(1/\rm e)$, the tetrational is just $F_{1}$ shown also in figure \ref{fig1e1fre}).

\begin{figure}
\noindent\hskip -9pt
\sx{1.3}{\begin{picture}(90,90) \put(0,0){\ing{b15zoom}}
\put(2,85){\sx{.6}{$y$}}
\put(85,.8){\sx{.6}{$x$}} 
\put(74,51.6){\sx{.6}{$q\!=\!0.2$}}
\put(74,43.6){\sx{.66}{$q\!=\!0$}}
\put(73,35.6){\sx{.6}{$q\!=\!-0.2$}}
\put(47,20){\sx{.6}{\rot{46}$p\!=\!1.6$\ero}}
\put(53,15){\sx{.6}{\rot{48}$p\!=\!1.8$\ero}}
\put(66,14){\sx{.6}{\rot{64}$p\!=\!2$\ero}}
\end{picture}}
\sx{1.3}{\begin{picture}(90,90) \put(0,0){\ing{be1ezoom}}
\put(2,85){\sx{.6}{$y$}}
\put(85,.8){\sx{.6}{$x$}} 
\put(68,56){\sx{.6}{\rot{24}$q\!=\!0.2$\ero}}
\put(74,43.6){\sx{.66}{$q\!=\!0$}}
\put(68,32){\sx{.6}{\rot{-28}$q\!=\!-0.2$\ero}}

\put(47,22){\sx{.6}{\rot{38}$p\!=\!1.6$\ero}}
\put(53,17){\sx{.6}{\rot{32}$p\!=\!1.8$\ero}}
\put(58,9){\sx{.6}{\rot{17}$p\!=\!2$\ero}}
\end{picture}}
\sx{1.3}{\begin{picture}(77,90) \put(0,0){\ing{bq2zoom}}
\put(2,85){\sx{.6}{$y$}}
\put(85,.8){\sx{.6}{$x$}}
\put(73,66){\sx{.6}{\rot{64}$q\!=\!0.2$\ero}}
\put(74,43.6){\sx{.66}{$q\!=\!0$}}
\put(72,25){\sx{.6}{\rot{-65}$q\!=\!-0.2$\ero}}

\put(47,22){\sx{.6}{\rot{31}$p\!=\!1.6$\ero}}
\put(52,18){\sx{.6}{\rot{12}$p\!=\!1.8$\ero}}
\put(54,8){\sx{.6}{\rot{-10}$p\!=\!2$\ero}}

 \end{picture}}
\caption{Tetrationals ${\rm tet}_{b}(x\!+\!\rmi y)=p\!+\!\rmi q$ for various $b$, from left to right:
$b\!=\!1.5$,
$b\!=\!\exp(1/\rme)\!\approx\! 1.44$ (the central part of map of $F_{1}$
at the top picture in figure \ref{figmapf}),  
and $b\!=\!\sqrt{2}\!\approx\! 1.41$ in the $x,y$ plane.
~\iL{fig:poltora}$\!\!\!\!$
}
\end{figure}
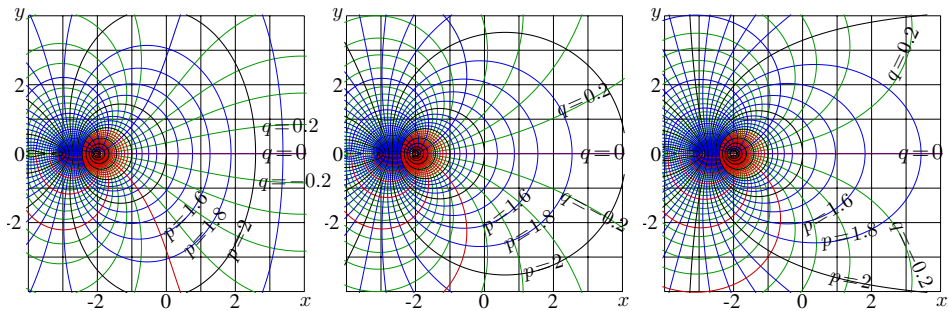

At moderate values of argument or order of unity or smaller, the curves for $b\!=\!1.5$, $b\!=\!\exp(1/\rme)$ and $b\!=\!\sqrt{2}$ are very close. In order to see the difference, the complex maps are shown in figure
\ref{fig:poltora} for $b\!=\!1.5$, left,~ for $b\!=\!\exp(1/\rm e)$,
central,~ and for $b\!=\!\sqrt{2}$, right. The central picture of
figure \ref{fig:poltora} is just a zoom-in from the central part of the top picture in figure \ref{figmapf}. The efficient algorithms of the computation allow to plot all the figures with some reserve of resolution; at the online version, they still can be zoomed-in.

Figure \ref{fig:poltora} indicates no qualitative change of the
tetrational at small variation of the base in vicinity of value
 $b\!=\!  \exp( 1/ \rme )$. 
In particular, within the loop $p\!=\!\Re\big( {\rm tet}_b(z)\big)\!=\!1$ ~ (this loop goes through the origin of coordinates in all the three pictures), the zooming-in of the central parts of pictures in figure \ref{fig:poltora} is necessary to see the difference. 

\newcommand \scalr {
\put(370,783){\sx{6}{$y$}}
\put(370,583){\sx{6}{$4$}}
\put(370,383){\sx{6}{$2$}}
\put(370,183){\sx{6}{$0$}}
\put( 600,150){\sx{6}{$2$}}
\put( 800,150){\sx{6}{$4$}}
\put(1000,150){\sx{6}{$6$}}
\put(1200,150){\sx{6}{$8$}}
\put(1420,150){\sx{6}{$x$}}
}

\begin{figure}
\begin{center}
\sx{.23}{\begin{picture}(1400,810)
\put(0,-10){\ing{fig14a}}\scalr
\put(0,620){\sx{10}{ \color{white}{\rule{24pt}{12pt}} }}
\put(10,640){\sx{10}{ $b\!=\!\rme$}}

\put( 455,710){\rot{85}\sx{4.5}{$c\!=\!2$}\ero}
\put( 565,710){\rot{80}\sx{4.5}{$c\!=\!1$}\ero}
\put( 620,700){\rot{75}\sx{4.4}{$c\!=\!0.9$}\ero}
\put( 700,700){\rot{65}\sx{4.4}{$c\!=\!0.5$}\ero}
\put( 840,710){\rot{50}\sx{4.4}{$c\!=\!0.1$}\ero}
\put( 925,720){\rot{45}\sx{4.6}{$c\!=\!0$}\ero}
\put(1080,710){\rot{40}\sx{4.4}{$c\!=\!-0.1$}\ero}
\put(1300,590){\rot{18}\sx{4.4}{$c\!=\!-0.5$}\ero}
\put(1330,450){\rot{10}\sx{4.5}{$c\!=\!-0.9$}\ero}
\put(1330,390){\rot{ 9}\sx{4.6}{$c\!=\!-1$}\ero}
\put(1330,250){\rot{ 5}\sx{4.6}{$c\!=\!-2$}\ero}

\put(430,445){\rot{73}\sx{4.3}{$y\!=\!\exp(\rme^x)$}\ero}
\put(495,445){\rot{73}\sx{4.4}{$y\!=\!\exp(x)$}\ero}
\put(595,445){\rot{63}\sx{4.4}{$y\!=\!\sqrt{\exp}(x)$}\ero}

\put(840,460){\rot{17}\sx{4.6}{$y\!=\!\sqrt{\ln}(x)$}\ero}
\put(890,330){\rot{ 8}\sx{4.6}{$y\!=\!\ln(x)$}\ero}
\put(890,260){\rot{ 5}\sx{4.6}{$y\!=\!\ln(\ln(x))$}\ero}

\put(14,315){\rot{1}\sx{4.8}{$c\!=\!2$}\ero}

\put(14,210){\rot{4}\sx{4.8}{$c\!=\!1$}\ero}
\put(14,155){\rot{4}\sx{4.7}{$c\!=\!0.9$}\ero}
\put(20, 95){\rot{ 6}\sx{4.7}{$c\!=\!0.5$}\ero}
\put( 90, 10){\rot{23}\sx{4.7}{$c\!=\!0.1$}\ero}

\put(180,-30){\rot{45}\sx{5.5}{$y\!=\!x$}\ero}

\put(280,-30){\rot{53}\sx{4.7}{$c\!=\!-0.1$}\ero}
\put(370,-10){\rot{68}\sx{4.5}{$c\!=\!-0.5$}\ero}
\put(450, 0){\rot{69}\sx{4.7}{$c\!=\!-1$}\ero}
\put(560,0){\rot{70}\sx{4.5}{$c\!=\!-2$}\ero}
\end{picture}}

\sx{.23}{\begin{picture}(1400,880)
\put(0,-10){\ing{fig14b}}\scalr
\put(-30,620){\sx{10}{ \color{white}{\rule{32pt}{12pt}} }}
\put(-20,640){\sx{10}{ $b\!=\!\rme^{1/\rme}$}}
\put( 374,456){\sx{6.5}{$\rme$}}
\put( 674,158){\sx{6.5}{$\rme$}}

\put( 800,680){\rot{75}\sx{4.5}{$c\!=\!2$}\ero}
\put( 870,750){\rot{65}\sx{4.1}{$c\!=\!1$}\ero}
\put( 910,760){\rot{65}\sx{4.01}{$c\!=\!0.9$}\ero}
\put( 940,760){\rot{58}\sx{4.01}{$c\!=\!0.5$}\ero}
\put(1010,805){\rot{50}\sx{4.01}{$0.1$}\ero}
\put(1050,790){\rot{45}\sx{4.01}{$-0.1$}\ero}
\put(1090,780){\rot{25}\sx{4.3}{$c\!=\!-0.5$}\ero}
\put(1150,760){\rot{19}\sx{4.4}{$c\!=\!-0.9$}\ero}
\put(1220,740){\rot{15}\sx{4.5}{$c\!=\!-1$}\ero}
\put(1280,650){\rot{ 7}\sx{4.5}{$c\!=\!-2$}\ero}

\put(940,630){\rot{19}\sx{4.5}{$y\!=\!\rme \ln(x)$}\ero}
\put(860,560){\rot{13}\sx{4.5}{$y\!=\!\rme \, \ln(\rme \ln(x))$}\ero}

\put(14,318){\rot{1}\sx{5}{$c\!=\!2$}\ero}
\put(14,233){\rot{4}\sx{5}{$c\!=\!1$}\ero}
\put( 290,325){\rot{12}\sx{4}{$y\!=\!\exp(\frac{1}{\rme}\rme^{x/\rme})$}\ero}
\put( 190,250){\rot{14}\sx{4}{$y\!=\!\exp(x/\rme)$}\ero}
\put(14,177){\rot{4}\sx{5}{$c\!=\!0.9$}\ero}
\put(20, 90){\rot{15}\sx{5}{$c\!=\!0.5$}\ero}
\put(120,  0){\rot{33}\sx{5}{$c\!=\!0.1$}\ero}
\put(280,-30){\rot{53}\sx{5}{$c\!=\!-0.1$}\ero}
\put(370,-10){\rot{67}\sx{4.5}{$c\!=\!-0.5$}\ero}
\put(440,-10){\rot{75}\sx{4.5}{$c\!=\!-0.9$}\ero}
\put(498, 0){\rot{79}\sx{4.7}{$c\!=\!-1$}\ero}
\put(570,0){\rot{85}\sx{4.9}{$c\!=\!-2$}\ero}
\end{picture}}

\sx{.23}{\begin{picture}(1400,880)
\put(0,-10){\ing{fig14c}}\scalr
\put(-30,620){\sx{10}{ \color{white}{\rule{32pt}{12pt}} }}
\put(-20,640){\sx{10}{ $b\!=\!\sqrt{2}$}}

\put( 855,705){\rot{75}\sx{4.5}{$c\!=\!2$}\ero}
\put( 900,750){\rot{65}\sx{4.1}{$c\!=\!1$}\ero}
\put( 930,760){\rot{65}\sx{4.01}{$c\!=\!0.9$}\ero}
\put( 960,760){\rot{58}\sx{4.01}{$c\!=\!0.5$}\ero}
\put(1025,805){\rot{50}\sx{4.01}{$0.1$}\ero}
\put(1050,800){\rot{45}\sx{4.01}{$-0.1$}\ero}
\put(1060,780){\rot{25}\sx{4.3}{$c\!=\!-0.5$}\ero}
\put(1110,780){\rot{19}\sx{4.4}{$c\!=\!-0.9$}\ero}
\put(1160,760){\rot{15}\sx{4.5}{$c\!=\!-1$}\ero}
\put(1290,705){\rot{ 7}\sx{4.5}{$c\!=\!-2$}\ero}
\put( 900,610){\rot{12}\sx{4.4}{$y\!=\!\ln_{\sqrt{2}}(\ln_{\sqrt{2}}(x))$}\ero}

\put(650,480){\rot{45}\sx{5}{$c\!=\!-2$}\ero}
\put(730,435){\rot{45}\sx{5}{$c\!=\!2$}\ero}

\put(14,318){\rot{1}\sx{5}{$c\!=\!2$}\ero}

\put(390,340){\rot{10}\sx{5}{$y\!=\!\sqrt{2}^{\sqrt{2}^x }$}\ero}

\put(14,233){\rot{4}\sx{5}{$c\!=\!1$}\ero}
\put(210,260){\rot{7}\sx{5}{$y\!=\!\sqrt{2}^{x}$}\ero}

\put(14,177){\rot{4}\sx{5}{$c\!=\!0.9$}\ero}
\put(20, 90){\rot{15}\sx{5}{$c\!=\!0.5$}\ero}
\put(120,  0){\rot{33}\sx{5}{$c\!=\!0.1$}\ero}
\put(280,-30){\rot{53}\sx{5}{$c\!=\!-0.1$}\ero}
\put(370,-10){\rot{67}\sx{4.5}{$c\!=\!-0.5$}\ero}
\put(440,-10){\rot{75}\sx{4.5}{$c\!=\!-0.9$}\ero}
\put(498, 0){\rot{79}\sx{4.7}{$c\!=\!-1$}\ero}
\put(570,0){\rot{85}\sx{4.9}{$c\!=\!-2$}\ero}
\end{picture}}

\end{center}
\begin{caption}{
The iterated exponentials $y\!=\!\exp_b^{[c]}(x)$ versus real $x$ for various real $c$, for bases 
$b\!=\!\rme$ , $b\!=\!\exp(1/\rme)$ and $b\!=\!\sqrt{2}$.
\iL{fig14}
}
\end{caption}
\end{figure}

\section{Comparison of the non-integer iterates of the exponential to different bases}
The non-integer iterates $\exp_b^{[c]}(x)$ are shown in figure
\ref{fig14} versus $x$ for $b\!=\!\rme$, $b\!=\!\exp(1/\rme)$,
$b\!=\!\sqrt{2}$ and various values of $c$. At
$1\!<\!b\!\le\exp(1/\rme)$ there are two $c$-iterates of the
exponential. The one valid for $x$ below the upper fixed point is
shown with a dashed line; the one valid for $x$ above the lower fixed point is drawn with a solid line. The curves for $b\!=\!\exp(1/\rme)$, $c\!=\!1/2$ and $c\!=\!1$ are the same as in top picture of figure \ref{figq}. For the evaluation of the iterated exponential to base  $b\!=\!\rme$, the approximation by \cite{vladi} is used (although the direct method by \cite{citeulike:4195962} could be used instead); the two iterated exponentials to base  $b\!=\!\sqrt{2}$ were evaluated with the algorithms described in \cite{kouznetsov:sqrt2}. In the last case, in the range $2\!<\!x\!<\!4$ both the iterated exponentials are valid, and the deviation between the two iterated exponentials is of order of $10^{-24}$; however, even in this case they are not holomorphic extensions of each other. (Being plotted on a paper, the distance between the two curves for the same $c$ is small not only in comparison to the size of an atom, but also small being compared to the size of atomic nuclei.)  

\JP {
The similar non-integer iterates can be constructed also for other functions. This would provide the holomorphic extension of the Mathematica function Nest. (The current implementation of the function Nest returns an error message if the second argument cannot be explicitly expressed as an integer constant.) with any for In order to simplify such construction, the notations used above are collected in the table 3.
\begin{table}
\title{Table 3. Notations}
\newcommand \pp {
$\!\!\!\!\!\!\!\!\!\!\!\!\!\!\!\!\!\!\!\!\!\!\!\!\!\!\!\!\!\!\!\!\!\!\!\!\!\!\!$&
$\!\!\!\!\!\!\!\!\!\!\!\!\!\!\!\!\!\!\!\!\!\!\!\!\!\!\!\!\!\!\!\!\!\!\!\!\!\!\!\!$}
\begin{tabular}{lr}
$A$ & Abel function, $A(F(z))=z$, see \rf{abel0}\\
$a(z)=1-\rme^z$ & basefunction, see eq.(11)\\
$b$ \pp base of the exponential or the super-exponential, see \rf{integerz}\\
$\mathbb C$ \pp set of complex numbers\\
$c$ \pp complex parameter, number of iterations of a function\\ 
$D_{\rm something}$ \pp  various agreement functions, see \rf{D1}, \rf{D3}\\
$d_{\rm q13}$ \pp deviation of two $\sqrt{\exp_b}$ above the cutline, see \rf{d}\\
$\rme=\sum_{n=0}^{\infty} \frac{1}{n!}\approx 2.71$  \pp base of natural logarithms\\
$\exp(z)=e^z=\sum_{n=0}^{\infty} \frac{z^n}{n!}$ \pp  exponential to base $\rme$\\
$\exp_b(z)=b^z=\exp(\ln(b) z)$ \pp  exponential to base $b$\\
$F$  \pp Super-function, solution of equation (\ref{f})\\
$F_1$  \pp Super-exponential to base $\exp(1/\rme)$ such that $F_1(0)=1$\\
$F_3$  \pp Super-exponential to base $\exp(1/\rme)$ such that $F_3(0)=3$\\
$f$ \pp base-function, for example, $f(z)=\exp(z/\rme)$, see(\ref{f})\\
$f=f(x\!+\!\rmi y)=p\!+\!\rmi q$ \pp dummy function used in the complex maps\\
$g_1(z) = \lim_{n\to\infty} - \frac{1}{3}\log(n) + \frac{2}{a^{[n]}(z)}-n, \quad z\!<\!0$ \pp Abel function\\
$h(z)=\rme^z-1$ \pp modified base-function, see page 3\\
$\rmi = \sqrt{-1}$ \pp imaginary unity\\
$\Im(z)=\Re(z/\rmi)$ \pp Imaginary part of $z$\\
$\rmi o$ \pp infinitesimal imaginary addition to argument of function\\
		\pp  indicating the upper branch at the cut\\
$j=x^2-x^3/12+x^4/48-x^5/180+... $\pp solution of the Julia equation (17)\\
$k$ \pp used as integer counter\\
$\lg(z)=\log_{10}(z)=\ln(z)/\ln(10)$ \pp logarithm to base $10$\\
$\ln(z)=\log(z)$ 
\pp logarithm to base $\rme$\\
$\log_b(z)=\log(z)/\ln(b)$ \pp logarithm to base $b$\\
$m$ \pp used as integer counter\\
$n$ \pp used as integer counter\\
${\rm pen}_b$ \pp pentational, 
super-function of super-exponential to base $b$\\
petal \pp branch of multivalued function extended beyond the cut\\
$p=\Re(f)$ \pp real part of a function in the complex maps\\
$q=\Im(f)$ \pp Imaginary part of a function in the complex maps\\
$\Re(z)$ \pp real part of $z$\\
$\mathbb R$ \pp set of real numbers\\
$t=-\ln(\pm x)$ \pp parameter of expansion, 
see \rf{fo},\rf{P}\\
$v(x)$ \pp divergent series in (18)\\
$x$ \pp often used as real variable\\
$y$ \pp often used as real variable\\
$y_n=\frac{f^{[n]}(-1) - f^{[n]}(1)}{f^{[n+1]}(1)-f^{[n]}(1)}$ \pp test sequence for Levy, Kuczma
see page 3\\
$y_n=-\frac{2}{h^{[n]}(-\rme^{-1}-1)}+\frac{2}{h^{[n]}(-1)}-1$ \pp
test sequence for Fatou; see page 4\\
$z$ \pp often used as complex variable; sometimes $z=x\!+\!\rmi y$\\
\\
$\alpha'(x)=\frac{1}{j(x)}$ \pp regular Abel function\\
$\alpha_u(t)=\sigma_u^{-1}$ \pp inverse function of the regular super-function\\
$\alpha^{(1)}_{\rm W}(z) = g_1(-z)$ \pp Abel function by Walker, see \rf{eq:walker}\\
$\alpha^{(2)}_{\rm W}(z) = g_2(z)$ \pp Abel function by Walker, see \rf{eq:walker2}\\

$\sigma_u(t)= h^{[t]}(u)$ \pp regular super-function\\

$\tau(z)=\rme ~(\!z\!+\!1)$ \pp linear transformation, see page 3\\
$\tau^{-1}(z)=z/\rme \!-\! 1$ \pp inverse linear transformation, see page 3\\
$\zeta=1\!-\!z/\rme$ & parameter of expansion, see \rf{gz1}
\end{tabular}
\end{table} 
}
\section{Conclusion and prospects}
We present some theory of regular iteration and apply it to
the case $f(z)=\rme^{z/\rme}$. 
We extract a quite efficient algorithm to calculate the two
super-logarithms to base $\rme^{1/\rme}$ compared with various
other (standard and separate) methods.
We suggest an efficient new non-polynomial approximation for the two
super-exponentials to base $\rme^{1/\rme}$. 

One of the two super-exponentials, the tetrational $F_{1}={\rm
  tet}_{\exp(1/\rme)}$, is holomorphic in the range $\C\wo\{x\in \R : x\le -2\}$ and 
strictly increasing along the real axis $>-2$. It asymptotically
approaches the limiting value $\rme$. 
The function approaches the same value $\rme$ also in any other
directions, i.e. at large values of $|z|$. The jump at the cut $z\!\le\!-2$ reduces to zero, as $|z|\rightarrow \infty$. 

The other super-exponential $F_{3}$ is an entire function. Along the real axis it 
is strictly increasing
from the limiting value $\rme$ at $-\infty$  to infinity, growing faster than any exponential.
Outside the positive part of the real axis, $F_{3}(z)$ approaches
$\rme$ at $|z|\rightarrow \infty$ in a similar way as $F_{1}$ does. 

Efficient calculation algorithms and portraits for bases
$b\!>\!\rme^{1/\rme}$ and $1\!<\!b\!<\!\rme^{1/\rme}$~
were already provided in
\cite{citeulike:4195962,kouznetsov:sqrt2}; so the whole range $b\!>\!1$
is now covered. 
The plots of the tetrational ${\rm tet}_b$ for $b\!=\!\rme^{1/\rme} \! \approx \! 1.44$ look similar to those for 
 $b\!=\! 1.5$ and those for $b\!=\! \sqrt{2}\!\approx\! 1.41$~;
one may expect that at any fixed value of $z$ from some range, the tetrational ${\rm tet}_b(z)$ is a continuous function of $b$ at least for $b\!>\! 1$. 
This raises the following question: 

Are the holomorphic tetrationals constructed in \cite{kouznetsov:sqrt2}, here and in 
\cite{Trappmann:uniqueness}
(generalization of \cite{Kneser:ReelleAnalytischeLoesungen} which is conjectured to be the super-exponential in \cite{citeulike:4195962}) 
--- which together cover the base range $(1,\infty)$ --- analytic as a function of the base $b$, particularly in the point $b=\rme^{1/\rme}$? If so, what is the range of holomorphism? 
If not: can one obtain an operation 
$(b,z)\mapsto {\rm tet}_b(z)$
defined for $b$ in a vicinity of $\rme^{1/\rme}$ such that for each $b$ the function 
$z\mapsto {\rm tet}_b(z)$
is a real-analytic tetrational on $(-2,\infty)$ and the function 
$b\mapsto {\rm tet}_b(z)$
is holomorphic for each $z$?

There is a similar bifurcation base $b\approx 1.6353$ for the
tetrational ${\rm tet}_b(z)$ as the bifurcation base $e^{1/e}$ is for
the exponential (i.e.\ where the two fixed points change into no
fixed point). One could apply the same methods we used to obtain the
super-exponentials to also obtain a super-tetrationals/pentationals.

\section*{Acknowledgement}
Authors thank the participants of the Tetration Forum\\
\url{http://math.eretrandre.org/tetrationforum/index.php}\\
for stimulating discussions.

\bibliographystyle{amsplain}
\bibliography{main}

\end{document}